\newcommand{\q}[1]{[ {#1} ]_q}
\newcommand{\Q}[1]{\{ {#1} \}_{q^2}}
\newcommand{\binomq}[2]{ \binom{ #1 }{ #2 }_q}
\newcommand{\Uq}{\mathcal{U}_q(\mathfrak{gl}_n)}
\newcommand{\DOI}[1]{\href{http://dx.doi.org/#1}{DOI:#1}}
\newcommand{\arXiv}[1]{\href{http://arxiv.org/abs/#1}{arXiv:#1}}
\newcommand{\Z}{\mathbb{Z}}
\newtheorem{theorem}{Theorem}[section]
\newtheorem{proposition}[theorem]{Proposition}
\newtheorem{lemma}[theorem]{Lemma}
\newtheorem{definition}[theorem]{Definition}
\title{A Multi--species ASEP$(q,j)$  and $q$--TAZRP 
with Stochastic Duality}
\author{Jeffrey Kuan}
\date{\today}
\begin{document}
\maketitle

\abstract{ 
This paper introduces a multi--species version of a process called $\textrm{ASEP}(q,j)$. In this process, up to $2j$ particles are allowed to occupy a lattice site, the particles drift to the right with asymmetry $q^{2j} \in (0.1)$, and there are $n-1$ species of particles in which heavier particles can force lighter particles to switch places. Assuming closed boundary conditions, we explicitly write the reversible measures and a self--duality function, generalizing previously known results for two--species ASEP and single--species $\textrm{ASEP}(q,j)$.

Additionally, it is shown that this multi--species $\textrm{ASEP}(q,j)$ is dual to its space--reversed version, in which particles drift to the left. As $j\rightarrow\infty$, this multi--species $\textrm{ASEP}(q,j)$ converges to a multi--species $q$--TAZRP and the self--duality function has a non--trivial limit, showing that this multi--species $q$--TAZRP satisfies a space--reversed self--duality.

The construction of the process and the proofs are accomplished utilizing spin $j$ representations of $\mathcal{U}_q(\mathfrak{gl}_n)$, extending the approach used for single--species $\textrm{ASEP}(q,j)$.
}

\section{Introduction}

This paper will introduce a multi--species ASEP$(q,j)$ process which has stochastic self--duality. In order to motivate the problem, first review previous results in literature. 

The Asymmetric Simple Exclusion Process (ASEP) was introduced in \cite{MGP} and \cite{Sp}. This is an interacting particle system on the one--dimensional lattice $\mathbb{Z}$, where at most one particle is allowed to occupy a lattice site. Each particle has two exponential clocks, one for left jumps and one for right jumps. The clock for left jumps has rate $q$ and the clock for right jumps has rate $q^{-1}$, where $q \in (0,1)$ is the asymmetry parameter, and all clocks are independent of each other. When the clock rings, the particle makes the corresponding jump to the adjacent site unless that site is occupied, in which case the jump is blocked. 

In \cite{S}, Sch\"{u}tz shows that ASEP on the finite lattice $\{1,\ldots,L\}$ with closed boundary conditions (that is, boundary conditions in which particles can neither enter nor exit the lattice) satisfies a self--duality property. If a particle configuration is written as $\{\eta^x: 1 \leq x \leq L\}$, where $\eta^x \in \{0,1\}$ is the number of particles at lattice site $x$, then the self--duality is with respect to the function 
$$
D(\eta,\xi) = \prod_{x=1}^L  q^{2\xi^x\sum_{z=x+1}^L \eta^z   + 2\xi^x x} 1_{\eta^x \geq \xi^x}
$$
The proof uses the $\mathcal{U}_q(\mathfrak{gl}_2)$ symmetry of ASEP to show duality.

Because self--duality can be used to prove significant results about a system (e.g. for deriving long--time fluctuations \cite{BCS} or weak asymmetry convergence \cite{CST}), it is natural to attempt to construct other interacting particle systems with algebraic symmetry which can be used to prove duality. In \cite{CGRS}, the authors lay out a framework which takes a finite--dimensional Lie algebra $\mathfrak{g}$ and a representation $V$ and produces an interacting particle system and an explicit formula for the self--duality function and reversible measures. This is then done for the spin $2j$ representation of $\mathfrak{gl}_2$, and the resulting interacting particle system is named ASEP$(q,j)$. This is a generalization of ASEP in which up to $2j$ particles can occupy a lattice site, with asymmetry parameter $q^{2j}$. If $\eta^x \in \{0,1,\ldots,2j\}$ denotes the number of particles at lattice site $x$, then the self--duality function is
$$
D(\eta,\xi) = \prod_{x=1}^L \frac{\binomq{\eta^x}{\xi^x}}{\binomq{2j}{\xi^x}}q^{\xi^x\left( 2\sum_{z=x+1}^L \eta^z + \eta^x\right) + 4j\xi^x x} 1_{\eta^x \geq \xi^x}
$$
where $\binomq{\cdot}{\cdot}$ is a $q$--deformation of the usual binomial, defined in \eqref{QBinomial} below. When $j=1/2$, ASEP$(q,j)$ is the usual ASEP and the self--duality function reduces to Sch\"{u}tz's self--duality function, up to a constant.

This approach was also used later for $\mathcal{U}_q(\mathfrak{su}(1,1))$ \cite{CGRS2} and for $\mathcal{U}_q(\mathfrak{sp}_4)$ \cite{K}. The most relevant case here is for the spin $1/2$ representation of $\mathfrak{gl}_3$ in \cite{K}. The resulting interacting particle system is ASEP with second--class particles introduced in \cite{L}. Here, there are two classes (or species) of particles, with at most one particle per site, and first--class particles can force second--class particles to switch places. ASEP with second--class particles has two natural Markov projections: the projection onto the evolution of the first--class particles is the usual ASEP, and the projection onto the evolution of particle occupation is also usual ASEP. See Figure \ref{Images} for an illustrative example. If $\eta^x_i \in \{0,1\}$ denotes the number of $i$--th class particles at lattice site $x$, then then duality function is
$$
D(\eta,\xi) = \prod_{x=1}^L q^{2\xi_1^x \sum_{z=x+1}^L \eta^z_1 + 2 \xi_1^x x} 1_{\eta_1^z \geq \xi_1^x} q^{2\xi_2^x \sum_{z=x+1}^L (\eta^z_1+\eta^z_2) + 2\xi_2^x x} 1_{\eta_1^x + \eta_2^x \geq \xi_1^x + \xi_2^x}
$$
Similar results were also found in \cite{BS,BS2} using the Perk--Schultz quantum spin chain \cite{PS}.

\begin{figure}\label{Images}
\begin{center}
\caption{ASEP with second--class particles, where $\alpha=q$ and $\beta=q^{-1}$ are the left and right jump rates. The black particles are first--class particles and the red particles are second--class. If a first--class particle attempts to jump to a site occupied by a second--class particle, then the particles switch places. If a second--class particle attempts to jump to a site occupied by a first--class particle, the jump is blocked. 
The second image shows projection to the first--class particles and the third image shows projection to particle occupation. Both are usual ASEP. }
\includegraphics[height=2cm]{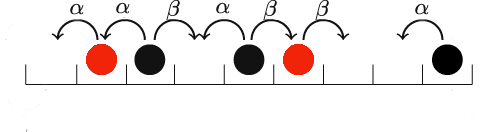}

\includegraphics[height=2cm]{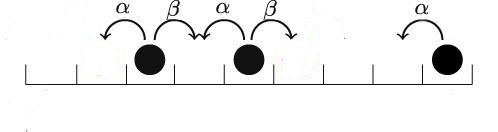}

\includegraphics[height=2cm]{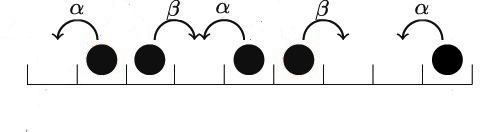}
\end{center}
\end{figure}

Here is an intuitive description of the result. If $\xi$ consists only of first--class particles, then the duality reduces to the Sch\"{u}tz duality for the projection onto the first--class particles. If $\xi$ consists only of second class particles, then the duality reduces to the Sch\"{u}tz duality for the projection onto particle occupation. Another way of describing the duality is that each first--class particle in $\xi$ counts first--class particles in $\eta$, and each second--class particle in $\xi$ counts both first and second--class particles in $\eta$. The interesting situation occurs when $\xi$ contains both first--class and second--class particles. 

With these previous results, it is natural to speculate that using the Lie algebra $\mathfrak{gl}_n$ and the spin $2j$ representation, the resulting interacting particle system should be a $(n-1)$--species ASEP$(q,j)$, in the sense that for each $1 \leq i \leq n-1$, the projection onto the first $i$ classes of particles is a $i$--species ASEP$(q,j)$. Furthermore, if the $\xi$ process in the self--duality function consists only of $i$th class particles, then the duality should reduce to the ASEP$(q,j)$ duality of \cite{CGRS} corresponding to the $i$th projection. Or, in other words, each $i$th class particle in $\xi$ counts $1,\ldots,i$th class particles in $\eta$ according to the ASEP$(q,j)$ duality. Indeed, this paper will prove that this is indeed true.

Additionally, in the $j\rightarrow\infty$ limit, the asymmetry parameter converges to $0$ and an arbitrary number of particles are allowed at each site. Indeed, the ASEP$(q,j)$ process converges to the $q$--TAZRP (Totally Asymmetric Zero Range Process)  introduced in \cite{SW}. A similar statement holds here: namely, the multi--species ASEP$(q,j)$ process in this paper converges to a multi--species $q$--TAZRP constructed in \cite{T}. Additionally, by applying a charge--parity symmetry, the self--duality function converges to a duality function between this multi--species $q$--TAZRP and its space--reversed version, in which particles jump in the opposite direction. 

During the preparation of this paper, the author learned in a private communication that an upcoming paper by V. Belitsky and G. Sch{\" u}tz \cite{BS3} analyzes the $n$--species ASEP with closed boundary conditions (corresponding to $j=1/2$ in the notation here) with $\mathcal{U}_{q}(\mathfrak{gl}_{n+1})$ symmetry and explicitly derives all invariant measures and a class of self--duality functions. Additionally, an application to the dynamics of shocks is proved. 

The reminder of this paper is organized as follows. Section \ref{Desc} defines the process and states the results for duality and reversible measures. Section \ref{RepDual} goes over the \cite{CGRS} framework and the necessary representation theory background. Section \ref{Constr} constructs the process using this framework. Section \ref{Results} proves the theorems concerning duality and reversible measures. 

\textbf{Acknowledgments.} Financial support was provided by the Minerva Foundation and NSF grant DMS-1502665. Part of the work was done during the workshop ``New approaches to non--equilibrium and random systems: KPZ integrability, universality, applications and experiments'' at the Kavli Institute for Theoretical Physics and was supported in part by NSF PHY11--25915. The author would like to thank Gioia Carinci, Ivan Corwin, Cristian Giardin\`{a}, Frank Redig, Tomohiro Sasamoto, Gunter Sch{\" u}tz, and Lauren Williams for valuable discussion.

\section{Description of Model and Statement of Results}\label{Desc}
Let $I=[a,b]\cap \Z\subset\Z$ be a (possibly infinite) connected interval in $\Z$, where $-\infty \leq a \leq b\leq \infty$. The state space $\mathcal{S}(n,j,I)$ of particle configurations is given by variables $\eta=(\eta_i^x: 1\leq i\leq n, x \in I)$ where each $\eta_i^x$ is a non--negative integer, and are subject to the condition that $\eta_1^x + \ldots + \eta_n^x = 2j$ for all $x$. One should think of $\eta_i^x$ as the number of $i$th class particles at site $x$ and $\eta_n^x$ as the number of holes. It will be convenient and useful to refer to holes as $n$th--class particles. For example, in the particle configuration below where $j=2,I=[1,4]\cap \Z$, some examples of $\eta_i^x$ are $\eta_3^1 = 1,\eta_1^2=2, \eta_4^4=2$.
$$
\begin{array}{cccc}
4 & 4 & 3 & 4\\
3 & 2 & 3 & 4 \\
2 & 1 & 2 & 3 \\
\underline{\text{ 1 }} & \underline{\text{ 1 }} &  \underline{\text{ 2 }} & \underline{\text{ 1 }}
\end{array}
$$
The classes of particles can also be viewed as having different masses, with the first--class particles being the heaviest and the $n$th--class particles being the lightest. The example above reflects the intuition that the heaviest particles should be closest to the ground.

For $1\leq k \leq l \leq n$, let 
$$
\eta_{[k,l]}^x = \eta_k^x + \ldots + \eta_l^x
$$
and note that $\eta_{[1,i]}^x + \eta_{[i+1,n]}^x=2j$ for all $i$ and $x$. Let $\emptyset$ denote the particle configuration with no particles, that is, $\emptyset^x_n=2j$ for all $x$. 

For a particle configuration $\xi$, let $\xi^{x \leftrightarrow x+1}_{i \leftrightarrow j}$ denote the particle configuration obtained by switching an $i$--th class particle at lattice site $x$ with a $j$--th class particle at lattice site $x+1$ (assuming that such a particle configuration exists).

Let $\q{n}$ and $\Q{n}$ denote the $q$--deformed integers
\begin{equation}\label{QDeformed}
\q{n} = \frac{q^{n}-q^{-n}}{q-q^{-1}} \quad \Q{n} = \frac{1-q^{2n}}{1-q^2}.
\end{equation}
Note that as $q\rightarrow 1$, both $\q{n}$ and $\Q{n}$ converge to $n$. For the remainder of this paper, $q$ will always be in the interval $(0,1)$.

\begin{definition} Suppose $f(\xi)$ is a function on $n$--species particle configurations $\xi \in \mathcal{S}(n,j,I)$ on the interval $I=[a,b]\cap \Z$, $-\infty \leq a \leq b \leq \infty$. The generator $\mathcal{L}$ of type $A_n$, spin $j$ ASEP on $I$ with closed boundary conditions acts on $f$ by
$$
\mathcal{L}f (\xi) = \sum_{x=a}^{b-1} \mathcal{L}_{x,x+1}f (\xi)
$$
where
\begin{multline*}
\mathcal{L}_{x,x+1}f (\xi) = \sum_{1 \leq k < l \leq n} \Big( q^{-1} \cdot q^{2(\xi_1^x +\ldots+\xi^x_{k-1})}\Q{\xi_k^x} \cdot q^{2(\xi^{x+1}_{l+1}+\ldots+\xi^{x+1}_n)} \Q{\xi^{x+1}_l}  \left(  f\left( \xi^{x \leftrightarrow x+1}_{k \leftrightarrow l} \right) - f(\xi)\right) \\
+ q \cdot q^{2(\xi^x_1+\ldots+\xi^x_{l-1})} \Q{\xi^x_l} \cdot q^{2(\xi^{x+1}_{k+1} + \ldots + \xi^{x+1}_n)} \Q{\xi_{k}^{x+1}} \left(  f\left( \xi^{x \leftrightarrow x+1}_{l \leftrightarrow k} \right) - f(\xi)\right) \Big)
\end{multline*}
\end{definition}

\textbf{Remark}. The process is named to emphasize the algebraic construction, since the Dynkin diagram of $\mathfrak{gl}_n$ is of type $A_n$. In principle, there could be other definitions of a multi--species ASEP$(q,j)$ process: for example, one could define a process where second--class particles can only jump from a site if there are no first--class particles at that site (see e.g. \cite{BK,BKS}). However, it is not obvious that all of them would necessarily satisfy a non--trivial duality, even if they satisfy an algebraic symmetry (e.g. \cite{M,NS}). 

The dynamics can be described through an ``inductive procedure''. Every lattice site has two clocks, one for left jumps and one for right jumps, with all clocks independent of each other. The clocks for left jumps are exponential with rate $q(1-q^2)^{-2}$ and the clocks for right jumps are exponential with rate $q^{-1}(1-q^2)^{-2}$. Because $q \in (0,1)$, this means the particles have a desire to drift to the right. There is a class system on the particles in which lighter particles cannot force heavier particles to move. Additionally, since each lattice site can accommodate precisely $2j$ particles, as a result the lower particles (that is, the particles closest to the ground) are given priority for right jumps in the following way.

When the right clock rings at lattice site $x$, the system asks the lowest particle if it would like to jump. With probability $1-q^2$ it says ``yes'' and with probability $q^2$ it says ``no.'' If it says ``no'', the system asks the next lowest particle if it would like to jump. Suppose that eventually, an $i$--th class particle decides to jump. However, it now needs to force a particle to switch places and move to the left. The particle starts with the highest particle (that is, the particle farthest from the ground). With probability $1-q^2$ the particle makes an attempt, and with probability $q^2$ the particle avoids an attempt and proceeds to the next--highest particle. If the particle makes an attempt at a $j$--th class particle, where $j>i$, then that $j$--th class particle moves to the left. If the $i$--th class particle makes an attempt at a $k$th class particle where $k \leq i$, then it is forced to return to the lattice site $x$, so no jump occurs. Observe that in this description, higher particles have a more difficult time moving to the right than lower particles do, because lower particles are asked to jump before higher particles.

\begin{figure}
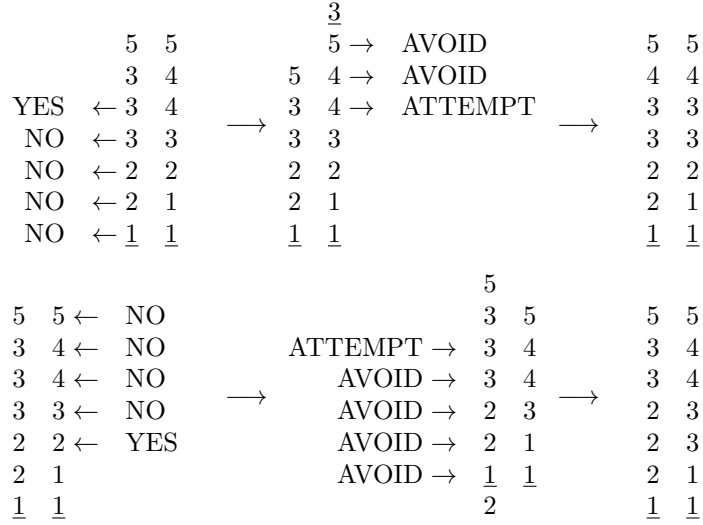

\caption{An example of the ``inductive procedure'' defined in the text. The top line shows a right jump where a third--class particle moves to the right and switches places with a fourth--class particle. The second line shows a left jump where a second--class particle jumps to the left and switches places with a third--class particle. Note that if it had attempted to switch with a first--class particle, then no jump occurs.}
$$
\begin{array}{rrc}
 & & \\
 & 5 & 5\\
 &  3 & 4\\
\text{YES} & \leftarrow 3 & 4\\
\text{NO} & \leftarrow 3 & 3\\
\text{NO} & \leftarrow 2 & 2\\
\text{NO} & \leftarrow 2 & 1\\
\text{NO}  & \leftarrow \underline{\text{1}} & \underline{\text{1}}\\
\end{array}
\quad
\longrightarrow
\begin{array}{cll}
 & \underline{3} & \\
 & 5\rightarrow & \text{AVOID} \\
 5 & 4\rightarrow & \text{AVOID} \\
 3 &  4\rightarrow & \text{ATTEMPT}\\
 3  & 3&\\
 2& 2&\\
 2& 1&\\
 \underline{1}& \underline{1}&
\end{array}
\longrightarrow
\begin{array}{rrc}
 & & \\
 & 5 & 5\\
& 4 & 4\\
 &  3 & 3\\
 &  3 & 3\\
 &  2 & 2\\
&  2 & 1\\
  &  \underline{\text{1}} & \underline{\text{1}}\\
\end{array}
$$
$$
\begin{array}{cll}
  & & \\
  5 & 5 \leftarrow & \text{NO} \\
  3 & 4 \leftarrow &\text{NO}\\
  3 & 4 \leftarrow &\text{NO}\\
  3 & 3 \leftarrow & \text{NO}\\
  2 & 2 \leftarrow &  \text{YES} \\
  2 & 1 &\\
  \underline{\text{1}} & \underline{\text{1}}\\
\end{array}
\quad
\longrightarrow
\begin{array}{rrc}
 &5 &   \\
 &3 & 5 \\
 \text{ATTEMPT} \rightarrow&3 & 4  \\
 \text{AVOID} \rightarrow&3 & 4\\
 \text{AVOID} \rightarrow&2 & 3\\
 \text{AVOID} \rightarrow&2 & 1\\
 \text{AVOID} \rightarrow&\underline{1}& \underline{1}\\
  & 2  & 
\end{array}
\longrightarrow
\begin{array}{rrc}
 & & \\
 & 5 & 5\\
& 3 & 4\\
 &  3 & 4\\
 &  2 & 3\\
 &  2 & 3\\
&  2 & 1\\
  &  \underline{\text{1}} & \underline{\text{1}}\\
\end{array}
$$
\end{figure}

Now suppose that the left jump clock rings at lattice site $x+1$. This occurrence is actually contrary to the particle's desire to move to the right. Therefore, in order to maintain the priority given to lower particles, the class system is momentarily \textit{reversed} for left jumps. That is, the system first asks the \textit{highest} particle if it would like to jump, with a jump occurring with probability $1-q^2$. If the particle declines to jump, the next highest particle is asked. Suppose that eventually an $i$th class particle attempts to jump to the left. It then attempts to move the \textit{lowest} particle to the right, with an avoidance of probability $q^2$. Note, however, that lighter particles cannot push heavier particles, so if it makes an attempt too soon then no jump actually occurs. 

It is not difficult to see that this verbal description matches the mathematical formula. For $k<l$, the rate of a $k$--th class particle at site $x$ jumping to the right and pushing a $l$--th class particle can be computed by
\begin{align*}
& [\text{rate of right jump clock}] \cdot [\text{all particles of class }1,\ldots,k-1 \text{ say no}] \cdot [\text{some class } k \text{ particle says yes}]  \\
& \quad \quad \cdot [\text{avoids all particles of class } n,n-1,\ldots,l+1] \cdot [\text{makes attempt at some class } l \text{ particle}] \\
& = q^{-1}(1-q^2)^{-2} \cdot q^{2(\xi^x_1 + \ldots + \xi^x_{k-1})} \cdot (1-q^2 + q^2(1-q^2) + \ldots + q^{2(\xi^x_{k}-1)}(1-q^2)) \\
& \quad \quad \cdot q^{2(\xi_n^{x+1}+\ldots+\xi_{l+1}^{x+1})} \cdot (1-q^2 + q^2(1-q^2) + \ldots + q^{2(\xi^{x+1}_{l}-1)}(1-q^2))\\
& = q^{-1} \cdot q^{2(\xi^x_1 + \ldots + \xi^x_{k-1})} \Q{\xi^x_k} \cdot q^{2(\xi_n^{x+1}+\ldots+\xi_{l+1}^{x+1})}  \Q{\xi^{x+1}_l}.
\end{align*}
An identical argument holds for left jumps.

From this description of the process, note a few basic properties, which will be proved rigorously later in the paper.

\begin{itemize}
\item
There are two natural Markov projections. Note that lighter particles cannot influence the behavior of heavier particles. Therefore, the projection onto the first $i$ classes by treating the $i+1,\ldots,n-1$ classes as holes is Markov. A second projection that demonstrates this lack of influence is to treat the $i,i+1,\ldots,n-1$ classes all as identical $i$th--class particles.

To mathematically define the projections, set for any map $\sigma:\{1,\ldots,n\}\rightarrow \{1,\ldots,m\}$ the projection $\Pi^{\sigma}:\mathcal{S}(n,j,I)\rightarrow \mathcal{S}(m,j,I)$ 
$$
\left( \Pi^{\sigma}\eta \right)^x_k = \sum_{l \in \sigma^{-1}(k)} \eta^x_l, \quad 1\leq k \leq m.
$$
where the sum is zero if $\sigma^{-1}(k)$ is empty.

Then the first projection is $\Pi^n_{i+1}:=\Pi^{\sigma}$ where $\sigma:\{1,\ldots,n\}\rightarrow \{1,\ldots,i+1\}$ is defined by 
$$
\sigma(l) 
= 
\begin{cases}
l, \quad & 1 \leq l \leq i, \\
i+1, \quad & i+1 \leq l \leq n,
\end{cases}
$$
and the second projection is $\tilde{\Pi}^n_{i+1}:=\Pi^{\sigma}$ where $\sigma:\{1,\ldots,n\}\rightarrow \{1,\ldots,i+1\}$ is defined by 
$$
\sigma(l) 
= 
\begin{cases}
l, \quad & 1 \leq l \leq i-1, \\
i, \quad & i \leq l \leq n-1,\\
i+1, \quad & l=n.
\end{cases}
$$



In the symmetric case $q=1$, the projection $\Pi^{\sigma}$ is Markov for every $\sigma$. These statements will be proved in Proposition \ref{MarkovProjection}.

\item
Because the left jump rates are determined from the right jump rates by reversing the order of the particles, the system satisfies the so--called \textit{charge--parity} (CP) symmetry. This means that making the substitutions in the lattice $x\mapsto L+1-x$ and in the class system $i\mapsto n+1-i$ preserves the generator. The CP--symmetry will later be used in finding a duality in the $j\rightarrow\infty$ limit. These statements will be proven in Proposition \ref{CPS}. 

\end{itemize}

It is natural to consider the symmetric limit, when $q\rightarrow 1$. The result is a multi--species version of the $2j$--SEP from \cite{GKRV}. For other single--species examples of symmetric processes with duality, see \cite{GRV} and \cite{SS}.

\begin{definition}\label{SSEP} The generator of type $A_n$, spin $j$ SSEP on $I=[a,b]\cap \mathbb{Z}$ is
$$
\mathcal{L}f(\xi) = \sum_{x=a}^{b-1} \sum_{1 \leq k < l \leq n}\left( \xi_k^x \xi_l^{x+1}\left( f(\xi_{k \leftrightarrow l}^{x \leftrightarrow x+1} ) - f(\xi) \right) + \xi_l^x \xi_k^{x+1} \left( f(\xi_{l \leftrightarrow k}^{x \leftrightarrow x+1} ) - f(\xi) \right) \right)
$$
\end{definition}
Observe that the generator is preserved under any permutation of the classes of the particles. Indeed, because the system satisfies the charge--parity symmetry, and also satisfies parity symmetry when $q=1$, it must satisfy charge symmetry as well. 

Now take the $j\rightarrow\infty$ limit in the definition of the generator of type $A_n$, spin $j$ ASEP. Restricting to particle configurations with only finitely many particles at each lattice site, this limit corresponds to taking all $\xi_n^x \rightarrow \infty$. From either the mathematical definition or the verbal description, one sees that the particles can only jump to the right. In fact, the result is a (time--rescaled) multi--species $q$--TAZRP (Totally Asymmetric Zero Range Process). This process was previously introduced in \cite{T}, which used a deformation of the affine Hecke algebra of type $GL_L$. 

\begin{definition}\label{Tak} \cite{T} The generator of type $A_n$ $q$--TAZRP on $I=[a,b]\cap \mathbb{Z}$ is
$$
\mathcal{L}f(\xi) =  \sum_{x=a}^{b-1} \sum_{i=1}^{n-1} q^{2(\xi^x_1+\ldots+\xi^x_{i-1})}\Q{\xi^x_i}  \left( f\left( \xi^{x \leftrightarrow x+1}_{i \leftrightarrow n}\right) - f(\xi) \right)
$$
\end{definition}

Taking both limits $q\rightarrow 1$ and $j\rightarrow\infty$ will depend on the order of the limits. In the $j\rightarrow\infty$ limit of Definition \ref{SSEP}, rescaling time by $2j$ results in $n-1$ independent symmetric zero range processes. Taking $q\rightarrow 1$ in Definition \ref{Tak} results in $n-1$ independent $1$--TAZRPs.

With the processes defined, we now proceed to the duality properties. Recall the definition of duality, which can be defined for Markov processes in general.

\begin{definition} Two Markov processes $X(t)$ and $Y(t)$ on state spaces $\mathfrak{X}$ and $\mathfrak{Y}$ are dual with respect to a function $D$ on $ \mathfrak{X}\times \mathfrak{Y}$ if
$$
\mathbb{E}_x\left[  D(X(t),y) \right] = \mathbb{E}_y \left[ D(x,Y(t))\right] \text{ for all } (x,y) \in \mathfrak{X}\times \mathfrak{Y} \text{ and all } t \geq 0.
$$
On the left--hand--side, the process $X(t)$ starts at $X(0)=x$, and on the right--hand--side the process $Y(t)$ starts at $Y(0)=y$.

An equivalent definition is that if the generator $\mathcal{L}_X$ of $X(t)$ is viewed as a $\mathfrak{X} \times \mathfrak{X}$ matrix, the generator $\mathcal{L}_Y$ of $Y(t)$ is viewed as a $\mathfrak{Y}\times \mathfrak{Y}$ matrix, and $D$ is viewed as a $\mathfrak{X} \times \mathfrak{Y}$ matrix, then $\mathcal{L}_XD = D\mathcal{L}_Y^*$.

If $X(t)$ and $Y(t)$ are the same process, in the sense that $\mathfrak{X}=\mathfrak{Y}$ and $\mathcal{L}_{{X}} = \mathcal{L}_{{Y}}$, then we say that $X(t)$ is self--dual with respect to $D$.
\end{definition}

Note that in the literature, there are slightly different conceptions of what it means for two Markov processes to be the ``same.'' For example, some authors allow for a bijection $\mathfrak{X} \rightarrow \mathfrak{Y}$ which sends $\mathcal{L}_X$ to $\mathcal{L}_Y$, and other authors require the state spaces to be irreducible.

\begin{theorem}\label{DualityTheorem}
(a)
The type $A_n$, spin $j$ process on $I \subseteq \mathbb{Z} $ is self--dual with respect to 
$$
D(\eta,\xi)= \prod_{x\in I} \q{\eta_1^x}^!  \prod_{i=1}^{n-1}   1_{\eta_{[1,i]}^x \geq \xi_{[1,i]}^x}   \frac{  \q{  \eta_{[1,i+1]}^x - \xi_{[1,i]}^x   }^!  }{\q{\eta_{[1,i]}^x   - \xi_{[1,i]}^x }^! }  q^{4jx\xi_i^x} q^{ \xi_i^x (\sum_{z>x} 2\eta_{[1,i]}^z + \eta_{[1,i]}^x)}
$$

(b) Space--reversed type $A_n$, spin $j$ ASEP is dual to type $A_n$, spin $j$ ASEP with respect to the function
$$
D(\eta,\xi)=\prod_{x\in I} \q{2j-\eta_{[1,n-1]}^x}^!  \prod_{i=1}^{n-1}   1_{2j -\eta_{[1,n-i]}^x \geq \xi_{[1,i]}^x}   \frac{  \q{  2j - \eta_{[1,n-i-1]}^x - \xi_{[1,i]}^x   }^!  }{\q{2j - \eta_{[1,n-i]}^x   - \xi_{[1,i]}^x }^! }   q^{ -\xi_i^x (\sum_{z>x} 2\eta_{[1,n-i]}^z + \eta_{[1,n-i]}^x)} 
$$

Space--reversed type $A_n$ $q$--TAZRP is dual to type $A_n$ $q$--TAZRP with respect to the function
$$
D(\eta,\xi)=\prod_{x\in I} \prod_{i=1}^{n-1}      q^{ \xi_i^x (\sum_{y \leq x} 2\eta_{[1,n-i]}^y) } .
$$
Here, the $\eta$ process has particles jumping to the left, while the $\xi$ process has particles jumping to the right.

\end{theorem}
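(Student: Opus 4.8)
\emph{Proof proposal.}

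\textbf{Part (a).} I would deduce the self--duality from the algebraic construction of Section~\ref{Constr} together with the explicit reversible measures, following the scheme of \cite{CGRS}. Recall that $\mathcal{L}=\sum_x\mathcal{L}_{x,x+1}$ is a ground--state transformation of a Hamiltonian $H=\sum_x H_{x,x+1}$ built from the coproduct action of $\Uq$ on the $L$--fold tensor power of the spin $j$ representation, and that $H$ commutes with $\Delta^{(L)}(u)$ for every $u\in\Uq$. If $M$ denotes the diagonal matrix of the reversible measure, then reversibility reads $\mathcal{L}^{*}=M\mathcal{L}M^{-1}$, so the duality identity $\mathcal{L}D=D\mathcal{L}^{*}$ is equivalent to $[\mathcal{L},DM]=0$: a self--duality function is precisely (the reversible--measure twist of) a symmetry of $\mathcal{L}$, and conjugating a coproduct symmetry of $H$ by the same ground--state transformation produces one. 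It then remains to pick the right symmetry and compute its matrix elements.

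The natural candidate is the symmetry built, via the coproduct, from a product of $q$--exponentials of the raising operators $E_1,\dots,E_{n-1}$ taken in a fixed order; equivalently $D(\cdot,\xi)$ is obtained by applying such an operator to the reference function supported on $\{\eta=\xi\}$, which builds up, site by site and within each site in the order $i=1,\dots,n-1$, the flag $\xi_1^x\le\xi_{[1,2]}^x\le\cdots$. Because each $\Delta(E_i)$ is a two--term expression, iterating the coproduct over the $L$ sites turns the matrix elements into the shifted factorials $\q{\cdot}^{!}$, the indicators $1_{\eta_{[1,i]}^x\ge\xi_{[1,i]}^x}$, and the weights $q^{\xi_i^x(2\sum_{z>x}\eta_{[1,i]}^z+\eta_{[1,i]}^x)+4jx\xi_i^x}$ of the statement; dividing by $M$ gives the displayed $D$. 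An alternative that reduces the representation theory to the known $\mathfrak{sl}_2$ case is induction on $n$: the Markov projections $\Pi^{n}_{n-1}$, $\tilde\Pi^{n}_{n-1}$ of Proposition~\ref{MarkovProjection} should identify the outermost factor of $D$ with the single--species $\textrm{ASEP}(q,j)$ self--duality of \cite{CGRS}, and coassociativity of the coproduct should peel off one species at a time, leaving the $(n-1)$--species duality function. I expect this to be the main obstacle: isolating the correct symmetry and pushing the iterated--coproduct computation through so that it reproduces the explicit nested formula, the genuinely new feature relative to \cite{CGRS} being the non--abelian positive part of $\Uq$, which makes the ordering of the $E_i$ matter.

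\textbf{Part (b), ASEP.} This follows from part (a) and the charge--parity symmetry of Proposition~\ref{CPS}. Since $x\mapsto L+1-x$ together with $i\mapsto n+1-i$ preserves $\mathcal{L}$ and both substitutions are involutions, the space--reversed generator equals the generator with classes relabelled by $\rho:i\mapsto n+1-i$, which acts on cumulative occupations by $\eta_{[1,i]}^x\mapsto 2j-\eta_{[1,n-i]}^x$. Applying $\rho$ to the $\eta$--variable of the self--duality of part (a) then gives a duality between space--reversed $\textrm{ASEP}(q,j)$ in $\eta$ and $\textrm{ASEP}(q,j)$ in $\xi$ with function $D(\rho\eta,\xi)$; substituting $\eta_{[1,i]}^x\mapsto 2j-\eta_{[1,n-i]}^x$ into the formula of part (a) reproduces every factor of the part~(b) formula up to the explicit extra factor $\prod_x\prod_i q^{\xi_i^x(4jx+4j\,|\{z\in I:z>x\}|+2j)}$. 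For $I=[a,b]$ finite this equals a constant times $q^{-2j(2b+1)\sum_x\xi_n^x}$, a function of the $\xi$--conserved hole number; since multiplying a duality function by a function of a quantity conserved by one of the processes again gives a duality function, this factor may be dropped, which also removes the $q^{4jx}$ term and is what makes the $j\to\infty$ limit nontrivial. (For infinite $I$ one argues on cylinder functions, or repeats the argument of part (a) directly for the space--reversed Hamiltonian.)

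\textbf{Part (b), $q$--TAZRP.} Finally I would take $j\to\infty$ in the $\textrm{ASEP}(q,j)$ duality just obtained, using that on configurations with finitely many non--$n$--class particles the type $A_n$ spin $j$ generator converges, upon $\eta_n^x\to\infty$, to the $q$--TAZRP generator of Definition~\ref{Tak} (and similarly for the space--reversed versions). In the part~(b) function, $\q{2j-\eta_{[1,n-1]}^x}^{!}$ and each ratio $\q{2j-\eta_{[1,n-i-1]}^x-\xi_{[1,i]}^x}^{!}/\q{2j-\eta_{[1,n-i]}^x-\xi_{[1,i]}^x}^{!}$ are products of factors $\q{m}=q^{-(m-1)}\Q{m}$ with $m\to\infty$; writing each such factor out, these split into a part that diverges in $j$ (absorbed, together with further gauge factors that are powers of conserved class counts, into a normalization $c_j$) and a surviving finite product of $q$--powers. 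Collecting the surviving $q$--powers from the factorials and from the explicit weight $q^{-\xi_i^x(2\sum_{z>x}\eta_{[1,n-i]}^z+\eta_{[1,n-i]}^x)}$, and using $\sum_{z>x}\eta_{[1,n-i]}^z=\sum_z\eta_{[1,n-i]}^z-\sum_{y\le x}\eta_{[1,n-i]}^y$ to move the now--constant factor into $c_j$ as well, leaves exactly $\prod_x\prod_i q^{2\xi_i^x\sum_{y\le x}\eta_{[1,n-i]}^y}$. Since on configurations with finitely many particles both generators act by finite sums, the identity $\mathcal{L}_jD_j=D_j\mathcal{L}_j^{*}$ for the renormalized $D_j$ passes to the limit term by term, yielding the stated duality between space--reversed $q$--TAZRP and $q$--TAZRP; the only delicate point is keeping the normalization $c_j$ identical on the two sides.
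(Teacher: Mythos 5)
Your proposal follows essentially the same route as the paper in all three parts: part (a) is exactly the \cite{CGRS} mechanism used here, with $D=G^{-1}SG^{-1}B^{2}$ for $S=\exp_{q^2}(\Delta^{(L-1)}E_{12})\cdots\exp_{q^2}(\Delta^{(L-1)}E_{n-1,n})$ as in Section~\ref{GST}; part (b) for ASEP is the charge--parity conjugation of Proposition~\ref{CPS}, and your bookkeeping of the extra factor $q^{\xi_i^x(4jx+4j|\{z>x\}|+2j)}$ as a conserved quantity is exactly how the paper absorbs the $q^{4jx\xi_i^x}$ term; and the $j\to\infty$ limit matches the paper's use of the asymptotics of $\q{2j-k}^{!}$. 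The one substantive shortfall is in part (a): you correctly identify the symmetry operator and the twist by $G$ and $B$, but the actual content of the paper's proof is the explicit evaluation of $S(\eta,\xi)$ and $G(\eta)$ (Proposition~\ref{S}, via the $q$--binomial action \eqref{Binomial} and the $q^{E_{ii}-E_{i+1,i+1}}$ contributions) and the subsequent simplification to the stated nested formula using the conserved--quantity identities \eqref{CQ1}--\eqref{CQ3}; you acknowledge this computation as ``the main obstacle'' but do not carry it out, so part (a) remains a correct plan rather than a proof. Everything you do write out is consistent with the paper.
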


\textbf{Remarks}. If $\xi$ has infinitely many particles, then $D(\eta,\xi)$ is identically zero and the theorem is still true, albeit trivial.

The duality in (a) generalizes previous dualities. When $n=2,j=1/2$ it reduces to \cite{S}, when $n=2$ and $j$ is arbitrary it reduces to \cite{CGRS} (for $q \in (0,1)$) and \cite{GKRV} (for $q=1$), and when $n=3,j=1/2$ it reduces to \cite{BS},\cite{K}.

When $n=2,j=1/2$, the duality in (b) is the same as in \cite{IS}. When $n=2$, the duality for this multi--species $q$--TAZRP reduces to that of \cite{BCS}. It is mentioned in Remark 3.6 of \cite{CGRS} that the duality functions converge to $0$ as $j\rightarrow\infty$, and no obvious re--normalization seems to result in a non--trivial self--duality function. And indeed, due to the \textit{total} asymmetry in the $q$--TAZRP, it is necessary to consider a space--reversed process. To see this, suppose that in the initial conditions for $\eta$ and $\xi$, the particles are contained in $[2,L-1]$. If both processes had particles jump to the right, then at large times all particles will occupy lattice site $L$. If the $\eta$ process evolves with $\xi$ fixed, then the duality function counts none of the particles. However, if the $\xi$ process evolves with $\eta$ fixed, then the duality function counts all of the particles. This violates the definition of duality, but this contradiction is avoided (in this case) if one of the processes is space--reversed.

The method of \cite{CGRS} also results in formulas for the reversible measures if $I$ is finite:

\begin{proposition} For any $\vec{m}=(m_1,\ldots,m_{n-1})$, let $\mathcal{S}_{\vec{m}}(n,j,I) \subset \mathcal{S}(n,j,I)$ denote the particle configurations on a finite interval $I \subseteq \mathbb{Z}$ with precisely $m_i$ particles of class $i$. Then the probability measure $\mathbb{P}_{\vec{m}}(\xi)$ on $\mathcal{S}(n,j,I)$ defined by 
$$
\mathbb{P}_{\vec{m}}(\xi) = 1_{ \xi \in \mathcal{S}_{\vec{m}}(n,j,I)}(Z_{\vec{m}}(n,j,I))^{-1}  \prod_{x\in I}  \prod_{i=1}^{n}  \frac{1}{\q{\xi_i^x}^!} q^{ ( \xi_i^x)^2/2 } \prod_{\substack{(y,x) \in I^2 \\ y < x}} \prod_{i=1}^{n-1}  q^{ -   2 \xi_{i+1}^y \xi_{[1,i]}^x  },
$$
where $Z_{\vec{m}}(n,j,I)$ is a normalizing constant, is a reversible measure of type $A_n$, spin $j$ ASEP. Any reversible measure is a convex combination of $\mathbb{P}_{\vec{m}}(\xi)$. 
\end{proposition}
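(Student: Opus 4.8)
The plan is to verify reversibility by checking detailed balance one transition at a time, and then to obtain the description of all reversible measures from irreducibility on the sectors of fixed particle number. First note that the numbers $m_i(\xi)=\sum_{x\in I}\xi_i^x$ for $1\le i\le n-1$ are conserved by $\mathcal{L}$, so $\mathcal{S}(n,j,I)$ is the disjoint union of the invariant finite sets $\mathcal{S}_{\vec m}(n,j,I)$, with no transitions between distinct sectors. Moreover, every off-diagonal entry of $\mathcal{L}$ corresponds to a move $\xi\mapsto \xi^{x\leftrightarrow x+1}_{k\leftrightarrow l}$ altering the configuration at a single pair of adjacent sites, and from the two altered columns one recovers the edge $(x,x+1)$ and the ordered pair of classes; hence for $\xi\ne\xi'$ the quantity $\mathcal{L}(\xi,\xi')$ is a single summand of a single $\mathcal{L}_{x,x+1}$. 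It therefore suffices to verify, for each edge, each $1\le k<l\le n$, and each $\xi$ with $\xi_k^x\ge 1$ and $\xi_l^{x+1}\ge 1$, the relation $\mathbb{P}_{\vec m}(\xi)\,c_1=\mathbb{P}_{\vec m}(\xi^{x\leftrightarrow x+1}_{k\leftrightarrow l})\,c_2$, where $c_1$ is the rate of the move $\xi\to\xi^{x\leftrightarrow x+1}_{k\leftrightarrow l}$ (the first term in the $(k,l)$ summand of $\mathcal{L}_{x,x+1}$) and $c_2$ is the rate of the reverse move (the second term, evaluated at $\xi^{x\leftrightarrow x+1}_{k\leftrightarrow l}$).

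To check this, write $\mathbb{P}_{\vec m}(\xi)=Z^{-1}_{\vec m}\prod_{z\in I}w(\xi^z)\prod_{y<z}V(\xi^y,\xi^z)$ with $w(\zeta)=\prod_{i=1}^n q^{(\zeta_i)^2/2}/\q{\zeta_i}^{!}$ and $V(\alpha,\beta)=\prod_{i=1}^{n-1}q^{-2\alpha_{i+1}\beta_{[1,i]}}$. The move decreases $\xi_k^x$ by $1$, increases $\xi_l^x$ by $1$, and does the opposite at $x+1$. One verifies that: (i) the single-site factors at $x$ and $x+1$ contribute $\q{\xi_k^x}\q{\xi_l^{x+1}}/(\q{\xi_l^x+1}\q{\xi_k^{x+1}+1})$ together with an explicit power of $q$; (ii) for each $y<x$ the factors $V(\xi^y,\xi^x)$ and $V(\xi^y,\xi^{x+1})$ change by reciprocal amounts, and for each $z>x+1$ the factors $V(\xi^x,\xi^z)$ and $V(\xi^{x+1},\xi^z)$ change by reciprocal amounts, so all long-range pair factors cancel and only $V(\xi^x,\xi^{x+1})$ survives, contributing $q^{-2(\xi_{[k+1,l]}^x+\xi_{[k,l-1]}^{x+1}+1)}$. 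On the other side, the identity $\Q{m}=q^{m-1}\q{m}$ (immediate from \eqref{QDeformed}) turns $c_1/c_2$ into the same ratio $\q{\xi_k^x}\q{\xi_l^{x+1}}/(\q{\xi_l^x+1}\q{\xi_k^{x+1}+1})$ times a power of $q$ built from the asymmetry prefactors $q^{\mp 1}$, the partial-sum exponents appearing in the rates, and the shifts $m\mapsto q^{m-1}$. A short computation using $\xi_{[k,l-1]}^x-\xi_{[k+1,l]}^x=\xi_k^x-\xi_l^x$ and the analogous identity at $x+1$ shows that the two powers of $q$ coincide, which gives detailed balance. (Alternatively, reversibility can be extracted from the algebraic construction of Section \ref{Constr} in the manner of \cite{CGRS}, where the same diagonal weights arise in the ground-state conjugation of the local operators.)

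For the last assertion, observe that for finite $I$ the chain restricted to a sector $\mathcal{S}_{\vec m}(n,j,I)$ is irreducible: any nearest-neighbour swap of particles of two distinct classes has strictly positive rate whenever the relevant particles are present (since $\Q{m}>0$ for $m\ge1$ and $q\in(0,1)$), and a standard sorting argument shows that these swaps connect any two configurations with the same class counts. Since the sector is finite and $\mathbb{P}_{\vec m}$ is a product of strictly positive terms, hence charges every configuration of the sector, the irreducible reversible chain has $\mathbb{P}_{\vec m}$ as its unique reversible (indeed, unique stationary) probability measure. Finally, an arbitrary reversible measure $\mu$ for $\mathcal{L}$ decomposes along the invariant sets, and for every $\vec m$ with $\mu(\mathcal{S}_{\vec m})>0$ the conditional law $\mu(\,\cdot\mid\mathcal{S}_{\vec m})$ is reversible for the restricted chain, hence equals $\mathbb{P}_{\vec m}$; thus $\mu=\sum_{\vec m}\mu(\mathcal{S}_{\vec m})\,\mathbb{P}_{\vec m}$, a convex combination.

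The main obstacle is the detailed-balance identity in the second paragraph: it is elementary but requires careful tracking of three separate sources of powers of $q$ — the single-site weights $w$, the surviving pair weight $V(\xi^x,\xi^{x+1})$, and the rate prefactors together with the $\Q{\cdot}$-to-$\q{\cdot}$ conversion — and one must confirm that they combine correctly for all $1\le k<l\le n$, in particular in the boundary cases $k=1$ and $l=n$. The reduction to a single-edge check, the cancellation of the long-range factors, and the irreducibility argument are all routine.
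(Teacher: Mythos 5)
Your proof is correct, but it takes a genuinely different route from the paper's. The paper obtains reversibility for free from the algebraic framework: Section \ref{RepDual} shows that $G^2B^{-2}$ satisfies detailed balance whenever $B^{-1}HB$ is self--adjoint (which is verified right after Proposition \ref{H}), so the paper's proof consists only of multiplying the explicit formula for $G$ from Proposition \ref{S} by $B^{-2}$ from \eqref{B} and simplifying $\Q{m}^!/(\q{m}^!)^2 = q^{m(m-1)/2}/\q{m}^!$. You instead verify detailed balance directly against the jump rates of Proposition \ref{Gen}, one edge and one ordered pair $k<l$ at a time; I checked the computation you sketch and it closes: both $c_1/c_2$ and $\mathbb{P}_{\vec m}(\xi')/\mathbb{P}_{\vec m}(\xi)$ reduce to $q^{-(\xi^x_{[k,l-1]}+\xi^x_{[k+1,l]})-(\xi^{x+1}_{[k,l-1]}+\xi^{x+1}_{[k+1,l]})}\,\q{\xi^x_k}\q{\xi^{x+1}_l}/\bigl(\q{\xi^x_l+1}\q{\xi^{x+1}_k+1}\bigr)$, with the long--range pair factors cancelling exactly as you claim. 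Your approach is more elementary and self--contained (no quantum--group input), at the cost of the bookkeeping you flag; the paper's is two lines but leans on the earlier machinery. A genuine added value of your write--up is the second half: the reduction to invariant sectors, the irreducibility-by-adjacent-swaps argument, and the resulting uniqueness on each $\mathcal{S}_{\vec m}$ together with the convex--combination decomposition of an arbitrary reversible measure are all asserted in the proposition but not actually argued in the paper's proof, which only derives the formula.
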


\textbf{Remarks.} In the $j\rightarrow\infty$ limit, the normalizing constant $Z_{\vec{m}}(n,j,I)$ converges to $0$. Indeed, because type $A_n$ $q$--TAZRP is totally asymmetric, it does not possess a reversible measure. When $q=1$, one obtains reversible product measures for multi--species $2j$--SEP. Also note that if $I$ is infinite, then in general $Z_{\vec{m}}(n,j,I)$ is zero, even if all $m_i$ are finite.

In the cases when $n=2,j=1/2$,  or $n=2$, $j$ arbitrary, or $n=3,j=1/2$, it is not hard to see that this expression reduces to previously known formulas for reversible measures \cite{BS,CGRS}. There are previous results (e.g. \cite{C2,CMW,M,MV,Uch}) which classify the stationary measures for multi--species ASEP with open boundary conditions, in which some particles are allowed to enter and exit the lattice. The $\mathbb{P}_{\vec{m}}(\xi)$ here must correspond to some degeneration of these results, but such an identification is not made here.

\section{From Representation Theory to Stochastic Duality}\label{RepDual}
This section will introduce the necessary background for the remainder of the paper. In \cite{CGRS}, a general framework is described for constructing an interacting particle system with stochastic duality from the representation theory of quantum groups. The approach is as follows:

\begin{itemize}
\item
Start with a quantization $\mathcal{U}_q(\mathfrak{g})$ of a Lie algebra $\mathfrak{g}$, and a central element $C\in \mathcal{U}_q(\mathfrak{g})$.

\item For a fixed representation $V$ of $\mathcal{U}_q(\mathfrak{g})$, take its weight space decomposition 
$$
V = \bigoplus_{\mu} V[\mu].
$$
Intuitively, each $V[\mu]$ is a generalized eigenspace, and each $\mu$ is a generalized eigenvalue. Pick a basis $\{v_{\mu}\}$ of $V$ where each $v_{\mu} \in V[\mu]$, and identify each $v_{\mu}$ with a particle configuration at a single lattice site.

\item Fix a choice of the co--product $\Delta: \mathcal{U}_q(\mathfrak{g}) \rightarrow \mathcal{U}_q(\mathfrak{g}) \otimes \mathcal{U}_q(\mathfrak{g})$.   Let $h$ be the linear operator on $V\otimes V$ defined by $\Delta(C)$.  Define a quantum Hamiltonian $H$ on $V^{\otimes L}$ by 
$$
H = \sum_{x=1}^{L-1} h^{x,x+1}
$$
where $h^{x,x+1}$ acts $h$ on the $x$--th and $(x+1)$--th component of $V^{\otimes L}$. Extend the basis of $V$ to a basis of $V^{\otimes L}$, and identify each $v_{\mu^1} \otimes \ldots \otimes v_{\mu^L}$ with a particle configuration on the one--dimensional lattice $\{1,\ldots,L\}$. For ease of notation, we will take $I=\{1,\ldots,L\}$ for the remainder of the paper, and there will be no loss of generality in the proofs -- by a standard exercise the Markov process is well--defined on the infinite lattice because the jump rates are uniformly bounded (e.g. Chapter 1 of \cite{Ligg}).

\item Suppose there is a positive ground state $g\in V^{\otimes L}$, that is, a $g$ satisfying $Hg=0$ where the coefficient of each basis vector $v_{\mu^1} \otimes \ldots \otimes v_{\mu^L}$ in $g$ is positive. Let $G$ be the diagonal matrix where each nonzero entry is the coefficient in $g$. Defining $\mathcal{L}=G^{-1}HG$, it follows from the construction that each row of $\mathcal{L}$ sums to zero. If the off--diagonal entries are nonnegative, then $\mathcal{L}$ is the generator for a continuous time Markov process. 

\item Suppose there is a diagonal matrix $B$ such that $B^{-1}HB$ is self--adjoint. For any operator $S$ on $V^{\otimes L}$ such that $SH=HS$, it follows from the definition of duality that $G^{-1}SG^{-1}B^2$ is a self--duality function for $\mathcal{L}$. Additionally, $G^2B^{-2}$ is a reversible measure of $\mathcal{L}$.

\item If there is a generator $\tilde{\mathcal{L}}$ on the same state space as $\mathcal{L}$ such that $\mathcal{L}=V^{-1}\tilde{\mathcal{L}}V$ and $D$ is a self--duality of $\mathcal{L}$, then $VD$ is a duality between $\tilde{\mathcal{L}}$ and $\mathcal{L}$.

\end{itemize}

The remainder of this section will describe each of these six items before proceeding to explicit calculations.

\subsection{Definition of Quantum Groups}

This subsection defines the quantum groups. See \cite{J} for a more thorough treatment. 

Let $q$ be a formal variable. The quantum group $\Uq$ is the Hopf algebra with generators  $\{ E_{i,i+1},E_{i+1,i}: 1\leq i \leq n-1\},\{q^{E_{ii}}:1\leq i\leq n\}$ satisfying the relations (below, $q^{E_{ii}}$ are all invertible and the multiplication is written additively in the exponential, so for example $q^{-E_{11} + 2E_{22}}= \left( q^{E_{11}}\right)^{-1} \left( q^{E_{22}}\right)^2$) 
$$
q^{E_{ii}}q^{E_{jj}} = q^{E_{jj}} q^{E_{ii}} = q^{E_{ii}+E_{jj}}
$$
$$
[E_{i,i+1},E_{i+1,i}] = \frac{q^{E_{ii}-E_{i+1,i+1}}-q^{E_{i+1,i+1}-E_{ii}}}{q-q^{-1}} \quad  \quad  [E_{i,i+1}E_{j+1,j}] =0, \quad i\neq j
$$
\begin{align*}
q^{E_{ii}}E_{i,i+1} &= q E_{i,i+1}q^{E_{ii}} \quad \quad q^{E_{ii}}E_{i-1,i} = q^{-1} E_{i-1,i} q^{E_{ii}}  \quad \quad [q^{E_{ii}},E_{j,j+1}]=0, \quad j\neq i,i-1\\
q^{E_{ii}}E_{i,i-1} &= q E_{i,i-1}q^{E_{ii}} \quad \quad q^{E_{ii}}E_{i+1,i} = q^{-1} E_{i+1,i} q^{E_{ii}} \quad \quad [q^{E_{ii}},E_{j,j-1}]=0, \quad j\neq i,i+1\
\end{align*}
\begin{align*}
E_{i,i+1}^2E_{j,j+1} - (q+q^{-1})E_{i,i+1}E_{j,j+1}E_{i,i+1} +  E_{j,j+1}E_{i,i+1}^2 = 0 , \quad & i = j\pm 1\\
E_{i,i-1}^2E_{j,j-1} - (q+q^{-1})E_{i,i-1}E_{j,j-1}E_{i,i-1} +  E_{j,j-1}E_{i,i-1}^2 = 0 , \quad & i = j\pm 1\\
[E_{i,i+1},E_{j,j+1} ]= 0 = [E_{i,i-1},E_{j,j-1}] , \quad & i \neq j\pm 1
\end{align*}

For any $1 \leq i \neq j \leq n$, define $E_{ij}$ inductively by
$$
E_{ij} = E_{ik}E_{kj} - q^{-1}E_{kj}E_{ik}, \quad i < k < j \text{ or } i > k > j.
$$
This definition does not depend on the choice of $k$. From \cite{GZB}, there is a central element
\begin{equation}\label{GZBC}
 \sum_{i=1}^n q^{2i-2n-1}q^{2E_{ii}}+ (q-q^{-1})^2 \sum_{1 \leq i < j \leq n} q^{2j-2n-2}q^{E_{ii}+E_{jj}}E_{ij}E_{ji}.
\end{equation}

\subsection{Representations}

For $2j\in \mathbb{Z}$, consider the spin $2j$ representation of $\mathcal{U}_q(\mathfrak{gl}_n)$. This is the finite--dimensional representation with dimension $\binom{2j+n-1}{n-1}$. The dimension is the number of ways to place $n$ species of particles at a single lattice site (in other words, $\binom{2j+n-1}{n-1}= \vert \mathcal{S}(n,j,1) \vert$). Therefore, a basis can be indexed by $\{v_{\mu}\}$ where $\mu$ is of the form
$$
\{\mu=(\mu_1,\mu_2,\ldots,\mu_n): \mu_1+\ldots+\mu_n=2j, \  \mu_i \geq 0\}
$$
One can think of this as $\mu_i$ particles of type $i$, where type $n$ particles are holes. Let $\Omega$ denote the vacuum vector, that is, 
$$
\Omega = v_l \otimes \cdots \otimes v_l 
$$
where $l=(0,0,\ldots,0,2j)$. This is the state with no particles. 

For $1 \leq i< j \leq n$, define
\begin{align*}
\mu_{j \rightarrow i}  &= (\mu_1,\ldots,\mu_{i-1}, \mu_i + 1, \mu_{i+1}, \ldots, \mu_{j-1}, \mu_j - 1, \mu_{j+1}, \ldots, \mu_n),\\  
\mu_{i \rightarrow j} &= (\mu_1,\ldots,\mu_{i-1}, \mu_i - 1, \mu_{i+1}, \ldots, \mu_{j-1}, \mu_j + 1, \mu_{j+1}, \ldots, \mu_n).
\end{align*}
So $\mu_{j \rightarrow i}$ represents the particle configuration where a $j$ class particle has been replaced with an $i$ class particle. Recall from \eqref{QDeformed} that the $q$--integers $\q{n}$ are defined by
$
\q{n} = \tfrac{q^n-q^{-n}}{q-q^{-1}}.
$
Extend these definitions to 
\begin{equation}\label{QBinomial}
\q{n}^! = \q{1} \cdots \q{n}, \quad \quad \binomq{n}{m} = \frac{ \q{n}^! }{ \q{n-m}^! \q{m}^!},
\end{equation}
where by convention $\q{0}^!=1$.
Note that these satisfy the relations
\begin{align}\label{Weyl}
\q{n+1}\q{m} &= \frac{q^{n+1+m}-q^{n+1-m}-q^{m-n-1}+q^{-n-1-m}}{(q-q^{-1})^2}, \notag \\
\q{n}\q{m+1} &= \frac{q^{n+m+1}-q^{n-m-1}-q^{m+1-n}+q^{-n-1-m}}{(q-q^{-1})^2}, \notag\\
\q{n+1}\q{m}-\q{n}\q{m+1} &= \frac{q^{n-m-1}+q^{m+1-n} - q^{n+1-m}-q^{m-n-1}}{(q-q^{-1})^2} = \frac{q^{m-n}-q^{n-m}}{q-q^{-1}} = \q{m-n},
\end{align}
and also for $n>1$,
\begin{equation}\label{Serre}
\q{n+1}-(q+q^{-1})\q{n} + \q{n-1} =0,
\end{equation}
and
\begin{equation}\label{Lusztig}
\q{n+1}-q^{-1}\q{n}=q^n.
\end{equation}
It will also be useful to define $\{n\}_q$ by
$$
\{n\}_q = \frac{1-q^n}{1-q},
$$
which satisfy $q^{n-1}\q{n} = \Q{n}$. There is also the identity
\begin{equation}\label{Projection}
 \Q{k+l} =   q^{2l}\Q{k} + \Q{l} 
\end{equation}
which extends inductively to a telescoping sum:
\begin{equation}\label{Telescope}
\Q{m_1+\ldots+m_{i-1}} = \sum_{r=1}^{i-1} q^{2(m_{r+1} + \ldots + m_{i-1})}\Q{m_r} .
\end{equation}
Similarly,
\begin{equation}\label{Telescope2}
\sum_{s=j+1}^n q^{2s-1} q^{2(m_{j+1}+\ldots+m_{s-1})} \Q{m_s+1} = \frac{q^{-1} \left( q^{2(j+1)} - q^{2n+2}q^{2(m_{j+1}+\ldots+m_n)} \right)}{1-q^2}.
\end{equation}

An explicit action of the generators of $\Uq$ on this representation is given in the following lemma.

\begin{lemma} The action
\begin{equation*}
E_{i,i+1} v_{\mu} = \q{\mu_{i+1}} v_{\mu_{i+1\rightarrow i}} \quad E_{i+1,i} v_{\mu} = \q{\mu_i} v_{\mu_{i \rightarrow i+1}} \quad q^{E_{ii}} v_{\mu} = q^{\mu_i} v_{\mu}
\end{equation*}
is a representation of $\mathcal{U}_q(\mathfrak{gl}_n)$. Furthermore, for $i<j$ the actions of $E_{ij}$ and $E_{ji}$ are given by
\begin{equation}\label{LusztigAction}
\begin{aligned}
E_{ij}v_{\mu} &= q^{\mu_{i+1}+\mu_{i+2}+\ldots+\mu_{j-1}}\q{\mu_j}v_{\mu_{j\rightarrow i}} \\
E_{ji}v_{\mu} &= q^{\mu_{i+1}+\mu_{i+2}+\ldots+\mu_{j-1}}\q{\mu_i}v_{\mu_{i\rightarrow j}} 
\end{aligned}
\end{equation}
\end{lemma}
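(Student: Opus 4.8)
\section*{Proof proposal}

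\textbf{Overall strategy.} The plan is to treat the two assertions in turn. First I would check that the displayed formulas for $E_{i,i+1}$, $E_{i+1,i}$, $q^{E_{ii}}$ satisfy every defining relation of $\Uq$ by evaluating both sides on an arbitrary basis vector $v_\mu$; then, granting that this is a representation, I would compute the action of the higher root vectors $E_{ij}$, $E_{ji}$ ($i<j$) by induction on $j-i$ using the inductive definition $E_{ij}=E_{ik}E_{kj}-q^{-1}E_{kj}E_{ik}$. The single organizing observation is that all of these operators are \emph{local}: $E_{i,i+1}$ and $E_{i+1,i}$ change only the coordinates $\mu_i,\mu_{i+1}$ of the index, and $q^{E_{ii}}$ only reads off $\mu_i$. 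Consequently every relation involving two generators whose active coordinates are disjoint (all the $|i-j|\ge 2$ instances of the mixed $E$--$E$ relations, the $q^{E_{ii}}$--$E$ relations, and the length-$3$ Serre relations) holds automatically, and the remaining overlapping cases collapse once one checks that the \emph{same} product of $q$--integers is produced in either composition order. The purely diagonal relations among the $q^{E_{ii}}$ are immediate, and the relations $q^{E_{ii}}E_{i,i+1}=qE_{i,i+1}q^{E_{ii}}$, $q^{E_{ii}}E_{i+1,i}=q^{-1}E_{i+1,i}q^{E_{ii}}$, etc., reduce to comparing $q^{\mu_i+1}$ with $q\cdot q^{\mu_i}$ after tracking the coordinate shift $\mu\mapsto\mu_{i+1\to i}$.

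\textbf{The Cartan relation.} For $[E_{i,i+1},E_{i+1,i}]$, applying each order of composition to $v_\mu$ returns a scalar multiple of $v_\mu$: one gets $E_{i,i+1}E_{i+1,i}v_\mu=\q{\mu_i}\q{\mu_{i+1}+1}v_\mu$ and $E_{i+1,i}E_{i,i+1}v_\mu=\q{\mu_{i+1}}\q{\mu_i+1}v_\mu$, so the commutator acts by $\q{\mu_{i+1}+1}\q{\mu_i}-\q{\mu_{i+1}}\q{\mu_i+1}=\q{\mu_i-\mu_{i+1}}$, using the last line of \eqref{Weyl}. Since $q^{E_{ii}-E_{i+1,i+1}}v_\mu=q^{\mu_i-\mu_{i+1}}v_\mu$, the right-hand side of the relation also acts by $\q{\mu_i-\mu_{i+1}}$, so the two agree.

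\textbf{The Serre relations.} This is the only step requiring genuine care, less because it is deep than because the composition order and the several simultaneous coordinate shifts are easy to mishandle. For the relation $E_{i,i+1}^2E_{i+1,i+2}-(q+q^{-1})E_{i,i+1}E_{i+1,i+2}E_{i,i+1}+E_{i+1,i+2}E_{i,i+1}^2$ only the coordinates $\mu_i,\mu_{i+1},\mu_{i+2}$ move, and each of the three words sends $v_\mu$ to a multiple of the single basis vector obtained by the net shift $(\mu_i,\mu_{i+1},\mu_{i+2})\mapsto(\mu_i+2,\mu_{i+1}-1,\mu_{i+2}-1)$; tracking the products of $q$--integers through the three words, the combination collapses to $\q{\mu_{i+1}}\q{\mu_{i+2}}\big(\q{\mu_{i+1}+1}-(q+q^{-1})\q{\mu_{i+1}}+\q{\mu_{i+1}-1}\big)$ times that vector, which vanishes by \eqref{Serre}. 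The variant with $E_{i+1,i+2}$ replaced by $E_{i-1,i}$, and the two lower Serre relations obtained by turning $E_{k,k+1}$ into $E_{k+1,k}$, are handled by the identical bookkeeping, each time reducing to \eqref{Serre}. This, together with the remark above that the disjoint-support instances are automatic, exhausts the defining relations, so the first display is a representation (note also $q^{E_{ii}}$ acts invertibly since $q^{\mu_i}\ne 0$).

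\textbf{The higher root vectors.} For the second assertion I would induct on $j-i$, the case $j=i+1$ being the definition. Using the recursion with $k=j-1$, $E_{ij}=E_{i,j-1}E_{j-1,j}-q^{-1}E_{j-1,j}E_{i,j-1}$, the induction hypothesis for $E_{i,j-1}$ and the base formula for $E_{j-1,j}$ give that both $E_{i,j-1}E_{j-1,j}v_\mu$ and $E_{j-1,j}E_{i,j-1}v_\mu$ are multiples of $v_{\mu_{j\to i}}$, with coefficients $q^{\mu_{i+1}+\cdots+\mu_{j-2}}\q{\mu_j}\q{\mu_{j-1}+1}$ and $q^{\mu_{i+1}+\cdots+\mu_{j-2}}\q{\mu_j}\q{\mu_{j-1}}$ respectively; then $\q{\mu_{j-1}+1}-q^{-1}\q{\mu_{j-1}}=q^{\mu_{j-1}}$ by \eqref{Lusztig} upgrades the power of $q$ to $\mu_{i+1}+\cdots+\mu_{j-1}$, which is exactly the formula in \eqref{LusztigAction}. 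The formula for $E_{ji}$ comes out the same way from the recursion with $k=i+1$, $E_{ji}=E_{j,i+1}E_{i+1,i}-q^{-1}E_{i+1,i}E_{j,i+1}$, again invoking \eqref{Lusztig}. Since the excerpt already records that $E_{ij}$ is independent of the intermediate index, these particular choices of $k$ involve no loss of generality, and no separate consistency check is needed.

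\textbf{Expected main obstacle.} There is no conceptual difficulty; the content is entirely the $q$--number identities \eqref{Weyl}, \eqref{Serre}, \eqref{Lusztig}. The one place where errors are likely is the Serre computation, specifically getting the order of composition and the three coordinate shifts consistent so that all three words land on the same basis vector before \eqref{Serre} is applied.
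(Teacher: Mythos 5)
Your proposal is correct and follows essentially the same route as the paper: verify the defining relations on basis vectors (with the Cartan relation reducing to the last line of \eqref{Weyl} and the Serre relations to \eqref{Serre}, the disjoint-support cases being automatic), then compute $E_{ij}$ and $E_{ji}$ by induction on $j-i$ via the recursion and \eqref{Lusztig}. The coefficients you record at each step match the paper's computation.
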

\begin{proof} The proof consists of checking that the relations defining $\Uq$ still hold in the representation.
By \eqref{Weyl},
$$
[E_{i,i+1},E_{i+1,i}]v_{\mu} =  \left(\q{\mu_{i+1}+1}\q{\mu_i} - \q{\mu_i+1}\q{\mu_{i+1}}\right)v_{\mu} = \frac{q^{E_{ii}-E_{i+1,i+1}} - q^{E_{i+1,i+1}-E_{ii}} }{q-q^{-1}}v_{\mu},
$$
so the relation on $[E_{i,i+1},E_{i+1,i}]$ holds. For $1\leq i\leq n-2,$
\begin{align*}
&\left(E_{i,i+1}^2 E_{i+1,i+2} - (q+q^{-1})E_{i,i+1} E_{i+1,i+2}E_{i,i+1} + E_{i+1,i+2}E_{i,i+1}^2\right)v_{\mu} \\
&= \left( \q{\mu_{i+1}}\q{\mu_{i+1}+1}\q{\mu_{i+2}} - (q+q^{-1}) \q{\mu_{i+1}}\q{\mu_{i+2}}\q{\mu_{i+1}} + \q{\mu_{i+2}}\q{\mu_{i+1}-1}\q{\mu_{i+1}} \right)v_{\mu}\\
&= \q{\mu_{i+2}}\q{\mu_{i+1}} \left( \q{\mu_{i+1}+1}   - (q+q^{-1})\q{\mu_{i+1}} + \q{\mu_{i+1}-1} \right) v_{\mu}
\end{align*}
so if $\mu_{i+1}=0$ then this is zero because $\q{\mu_{i+1}}=0$, and else it is zero by \eqref{Serre}. The remaining relations are similar to check.

To prove \eqref{LusztigAction}, use induction and \eqref{Lusztig}:
\begin{align*}
E_{ij}v_{\mu} &= E_{i,j-1} \q{\mu_j}v_{\mu_{j\rightarrow j-1}} - q^{-1}E_{j-1,j}E_{i,j-1}v_{\mu} \\
&= (q^{\mu_{i+1}+\mu_{i+2}\ldots+\mu_{j-2}}\q{\mu_{j-1}+1}\q{\mu_j} - q^{-1}\q{\mu_j}q^{\mu_{i+1}+\mu_{i+2}\ldots+\mu_{j-2}}\q{\mu_{j-1}})v_{\mu_{j \rightarrow i}}\\
&= q^{\mu_{i+1}+\mu_{i+2}+\ldots+\mu_{j-1}}\q{\mu_j}v_{\mu_{j \rightarrow i}}. 
\end{align*}
with the argument for $E_{ji}v_{\mu}$ being similar. 
\end{proof} 

The following equations, which are not hard to prove, will be used later in the paper.
\begin{align}\label{Binomial}
\frac{E_{i,i+1}^m}{\Q{m}^!} v_{\mu}&= \frac{\q{\mu_{i+1}}\cdots\q{\mu_{i+1}-m+1}}{\Q{m}^!}v_{\mu+m\epsilon_i} 
 =\frac{  \q{\mu_{i+1}}\cdots\q{\mu_{i+1}-m+1} }{q^{m(m-1)/2}\q{m}^!}v_{\mu+m\epsilon_i} \notag \\
&= q^{m/2}q^{-m^2/2} \binom{\mu_{i+1}}{m}_qv_{\mu+m\epsilon_i}.
\end{align}
and
\begin{equation}\label{LusztigAction2}
\begin{aligned}
q^{E_{jj}}E_{ij}v_{\mu} &= q^{\mu_{i+1}+\mu_{i+2}+\ldots+\mu_{j-1}}\Q{\mu_j} v_{\mu+\epsilon_i+\ldots+\epsilon_{j-1}}, \\
q^{E_{ii}}E_{ji}v_{\mu} &= q^{\mu_{i+1}+\mu_{i+2}+\ldots+\mu_{j-1}}\Q{\mu_i} v_{\mu-\epsilon_i-\ldots-\epsilon_{j-1}}, \\
q^{2E_{jj}}E_{ij}v_{\mu} &= q^{-1} \cdot q^{\mu_{i+1}+\mu_{i+2}+\ldots+\mu_{j}}\Q{\mu_j} v_{\mu+\epsilon_i+\ldots+\epsilon_{j-1}}, \\
q^{2E_{ii}}E_{ji}v_{\mu} &= q^{-1}\cdot q^{\mu_{i}+\mu_{i+2}+\ldots+\mu_{j-1}}\Q{\mu_i} v_{\mu-\epsilon_i-\ldots-\epsilon_{j-1}}, \\
q^{E_{ii}+E_{jj}}E_{ij}v_{\mu} &= q \cdot q^{\mu_{i}+\mu_{i+1}+\ldots+\mu_{j-1}}\Q{\mu_j} v_{\mu+\epsilon_i+\ldots+\epsilon_{j-1}}, \\
q^{E_{ii}+E_{jj}}E_{ji}v_{\mu} &= q \cdot q^{\mu_{i+1}+\ldots+\mu_{j}} \Q{\mu_i} v_{\mu-\epsilon_i-\ldots-\epsilon_{j-1}}.
\end{aligned}
\end{equation}

\subsection{Quantum Hamiltonian}

The co--product is an algebra morphism $\Delta: \Uq \rightarrow \Uq \otimes \Uq$ defined by
\begin{align*}
\Delta\left(q^{E_{ii}}\right) &= q^{E_{ii}} \otimes q^{E_{ii}}, \\
\Delta\left( E_{i,i+1} \right) &= q^{E_{ii}-E_{i+1,i+1}} \otimes E_{i,i+1} + E_{i,i+1} \otimes 1, \\
\Delta\left( E_{i,i-1} \right) &= 1 \otimes E_{i,i-1} + E_{i,i-1} \otimes q^{E_{i+1,i+1}-E_{ii}}.
\end{align*}
Note that unless $q = 1$, $\Delta$ does not satisfy co--commutativity. In other words, if $P$ is the permutation $P(a\otimes b)=b\otimes a$, then $P\circ\Delta \neq \Delta$. This is the algebraic explanation for why the asymmetry occurs. However, the co--product does satisfy the co--associativity property
$$
(\mathrm{id} \otimes \Delta) \circ \Delta = (\Delta \otimes \mathrm{id}) \circ \Delta,
$$
so that there is a well--defined algebra morphism $\Delta^{(m-1)} : \Uq \rightarrow \Uq^{\otimes m}$ satisfying $\Delta^{(m)} = (\mathrm{id} \otimes \Delta^{(m-1)}) \circ \Delta = (\Delta^{(m-1)} \otimes \mathrm{id}) \circ \Delta$. Explicitly, 
\begin{align*}
\Delta^{(m-1)}(E_{i,i+1}) &= \sum_{x=1}^m (q^{E_{ii}})^{\otimes x-1} \otimes E_{i,i+1} \otimes 1^{\otimes m-x}, \\
\Delta^{(m-1)}(E_{i+1,i}) &= \sum_{x=1}^m 1^{\otimes x-1} \otimes E_{i+1,i} \otimes \left(q^{-E_{ii}}\right)^{\otimes m-x}, \\
\Delta^{(m-1)}(q^{E_{ii}}) &= q^{E_{ii}} \otimes \cdots \otimes q^{E_{ii}}.
\end{align*}
It is not hard to see that for $1\leq i<j\leq n$,
\begin{equation}\label{CoProd}
\begin{aligned}
\Delta E_{ij} &= E_{ij} \otimes 1 + q^{E_{ii}-E_{jj}} \otimes E_{ij}+(q - q^{-1})\left [ \sum_{r=i+1}^{j-1} (q^{E_{rr}-E_{jj}} E_{ir}) \otimes E_{rj}  \right ] \\
\Delta E_{ji} &=1 \otimes E_{ji}  + E_{ji} \otimes q^{E_{jj}-E_{ii}} + (q - q^{-1}) \left [ \sum_{r=i+1}^{j-1} E_{ri} \otimes (q^{E_{rr}-E_{ii}}E_{jr})\right ].
\end{aligned}
\end{equation}
Note that because $bC-a$ is also central for any complex numbers $a,b$, any $bH-a\cdot \mathrm{Id}$ will also produce a suitable Hamiltonian. 

Although it will not be used explicitly, it is worth noting that there is an automorphism $\omega$ of $\Uq$ such that $\omega(E_{i,i+1})=E_{i+1,i},\  \omega(E_{i+1,i})=E_{i,i+1},$ and $\omega(q^{E_{ii}})=q^{-E_{ii}}$ (see e.g. Lemma 4.6(a) of \cite{J}). From the expression for $\Delta$, it can be seen that $\omega$ simultaneously reverses the order of the class system and the direction of the asymmetry, and hence provides an algebraic explanation for the CP--symmetry used below.

\subsection{Ground State Transformation}\label{GST}
To construct a ground state, \cite{CGRS} uses a $q$--exponential series applied to certain creation operators. 
Define
$$
\exp_{q^2} (x) = \sum_{n=0}^{\infty} \frac{x^n}{\Q{n}^!}
$$
where
$$
\Q{n}^! = \Q{1} \cdots \Q{n}
$$
and by convention $\Q{0}^!=1$. When $q\rightarrow 1$, $\exp_{q^2}(x)$ converges to the usual exponential $e^x$. By Proposition 5.1 of \cite{CGRS} and the definition of the co--product,
\begin{multline*}
\exp_{q^2} \left( \Delta^{(L-1)}E_{i,i+1} \right) \\
=  \exp_{q^2}\left(E_{i,i+1}^1\right) \exp_{q^2}\left( (q^{E_{ii}})^1 E_{i,i+1}^2\right) \exp_{q^2}\left( (q^{E_{ii}})^1 (q^{E_{ii}})^2 E_{i,i+1}^3\right)\cdots \exp_{q^2} \left((q^{E_{ii}})^1 \cdots (q^{E_{ii}})^{L-1}E_{i,i+1}^L \right)
\end{multline*}
where as usual, $(q^{E_{ii}})^x$ and $E_{i,i+1}^x$ mean that $q^{E_{ii}}$ and $E_{i,i+1}$ are acting at the lattice site indexed by $x$. 
Now define the operator
$$
S = \exp_{q^2}\left( \Delta^{(L-1)} E_{12} \right) \cdots \exp_{q^2}\left( \Delta^{(L-1)} E_{n-1,n} \right).
$$
Define $g:=S\Omega$ to be $S$ applied to the vacuum vector $\Omega$. By construction, the coefficients of $g$ in the canonical basis are all positive. Because $HS=SH$, we have
$$
Hg=HS\Omega = SH\Omega.
$$
From the explicit form of the central element $\Omega$ is an eigenvector of $H$. By replacing $H$ with $H-\lambda \mathrm{Id}$ where $H\Omega=\lambda\Omega$, this shows that $g$ is a positive ground state. One can check that $\lambda$ is the constant
$$
q^{-2n+3} + q^{-2n+5} + \ldots + q^{-1} + q^{4j+1}, 
$$
but this will not be explicitly used.

\textbf{Remark.} Note that one could also use Theorem 5.1 of \cite{CGRS} to define an operator
$$
S^- = \exp_{q^{-2}}\left( \Delta^{(L-1)}E_{n,n-1} \right) \cdots \exp_{q^{-2}}\left( \Delta^{(L-1)} E_{21} \right)
$$
and apply it to the particle configuration consisting of only first--class particles. However, this will ultimately result in the same process with the same duality. This can either be seen by a (long) calculation, or immediately with algebra. Because the sub--representation generated by the vacuum vector is the spin $jL$ representation, which has one--dimensional weight spaces, the ground state transformation is unchanged. Furthermore, there is an anti--automorphism of $\mathcal{U}_q(\mathfrak{g})$ which maps $E_{i,i+1}\mapsto E_{i+1,i}, E_{i+1,i}\mapsto E_{i,i+1},q^{E_{ii}}\mapsto q^{E_{ii}},$ (in the notation of 4.6 of \cite{J}, it is $\omega\circ\tau)$ so the duality is the same (up to a constant). Indeed, \cite{CGRS} carries through the calculation with both symmetries and arrives at the same duality function.

\subsection{Change of Basis}
In \cite{CGRS}, it is assumed that the Hamiltonian is self--adjoint. Here, this assumption is weakened slightly. Suppose that $B$ is a diagonal matrix such that $B^{-1}HB$ is self--adjoint. If $\mathcal{L} = G^{-1}HG$  then 
$$
\mathcal{L}^* = (G^{-1}B \cdot B^{-1}HB \cdot B^{-1}G)^* = B^{-1}G \cdot B^{-1}HB \cdot BG^{-1} = B^{-2} GHG^{-1} B^2.
$$
So if $D=G^{-1}SG^{-1}B^2$ where $SH=HS$ then
$$
\mathcal{L}D = G^{-1}HG G^{-1}SG^{-1}B^2 = G^{-1} SHG^{-1}B^2 = G^{-1}SG^{-1} B^2 \cdot G B^{-2}HB^2 G^{-1} = D\mathcal{L}^*
$$
and thus $D$ is a self--duality. 

In \cite{CGRS}, it is shown that $G^2$ defines a reversible measure assuming that $H$ is self--adjoint. It is straightforward to extend this to see that $G^2B^{-2}$ defines a reversible measure with the weakened assumptions. Indeed, the detailed balance condition reads
$$
G^2(x)B^{-2}(x) \cdot G^{-1}(x)H(x,y)G(y) = G^2(y)B^{-2}(y) \cdot G^{-1}(y)H(y,x)G(x),
$$
which is equivalent to 
$$
B^{-2}(x)H(x,y) = B^{-2}(y)H(y,x),
$$
which is itself equivalent to the assumption that $B^{-1}HB$ is self--adjoint.

The weakening of the self--adjointness assumption is chosen merely for reasons of convenience. Choosing a representation in which the Hamiltonian is self--adjoint could result in messier calculations, and it is easier to choose a simpler representation at the beginning and then multiply by $B^2$ at the very end. 

\subsection{Conjugation by Involution}\label{Invo}
Suppose that there is a matrix $V$ such that $\mathcal{L} = V^{-1} \mathcal{\tilde{L}}V$ where $\mathcal{\tilde{L}}$ is the generator for some continuous--time Markov process on the same state space as $\mathcal{L}$. Then if $D$ is a self--duality of $\mathcal{L}$,
$$
\mathcal{L}D = D \mathcal{L}^*  \Longrightarrow V^{-1}\mathcal{\tilde{L}}VD = DV^*\mathcal{\tilde{L}}^*(V^{-1})^* \Longrightarrow \mathcal{\tilde{L}}VDV^* = VDV^* \mathcal{\tilde{L}}^*
$$
so that $VDV^*$ is a self--duality of $\mathcal{L}^*$. Additionally,
$$
V^{-1}\tilde{\mathcal{L}}VD = D\mathcal{L}^* \Longrightarrow \tilde{\mathcal{L}}VD = VD\mathcal{L}^*
$$
so $VD$ is a duality between $\tilde{\mathcal{L}}$ and $\mathcal{L}$. 

In particular, if $V$ is a permutation matrix (that is, a matrix with exactly one $1$ in each row and column and zeroes everywhere else) such that $V=V^{-1}$, then $\mathcal{\tilde{L}}$ is automatically a generator, because the diagonal entries are still non--negative and the off--diagonal entries are still non--positive. If $T$ is an involution on the state space (i.e. a bijection such that $T=T^{-1}$), then taking $V$ to be the matrix of $T$ will result in a permutation matrix.

\section{Construction of Hamiltonian and Generator}\label{Constr}
 
\subsection{Hamiltonian}

Here, we compute an explicit formula for the Hamiltonian. Because $H$ is a sum of local operators $h^{x,x+1}$, it suffices to compute each $h^{x,x+1}$. Additionally, it is only necessary in the end to find the off--diagonal terms, because in the generator the sum of each row is $0$. Write $(\mu,\lambda)$ for the particle configurations at $x$ and $x+1$. 

\begin{proposition}\label{H} For $1\leq i<j\leq n$, 
\begin{align*}
h^{x,x+1}((\mu_{i \rightarrow j},\lambda_{j\rightarrow i}),(\mu,\lambda)) &= q^{-2} q^{\mu_i+\mu_{i+1}+\ldots+\mu_{j-1}}\Q{\mu_i} q^{\lambda_{i+1}+\ldots+\lambda_{j}} \Q{\lambda_j} \cdot q^{2(\lambda_{j+1}+\ldots+\lambda_n)} \cdot q^{2(\mu_1+\ldots+\mu_{i-1})},\\
h^{x,x+1}((\mu_{j \rightarrow i},\lambda_{i \rightarrow j}),(\mu,\lambda)) &= q^{\mu_{i}+\ldots+\mu_{j-1}}\Q{\mu_j} q^{\lambda_{i+1}+\ldots+\lambda_{j}} \Q{\lambda_i}  \cdot  q^{2(\lambda_{j+1}+\ldots+\lambda_n)} \cdot q^{2(\mu_1+\ldots+\mu_{i-1})}.
\end{align*}

\end{proposition}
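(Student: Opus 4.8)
The plan is to compute $\Delta(C)$ directly, where $C$ is the central element \eqref{GZBC} of \cite{GZB}, and then read off its off--diagonal matrix elements on $V\otimes V$: the diagonal entries are irrelevant (each row of the generator sums to zero), and an overall rescaling by $(q-q^{-1})^{-2}$ together with an additive multiple of the identity are harmless by the remark following \eqref{CoProd}. Under $\Delta$ the first sum $\sum_i q^{2i-2n-1}q^{2E_{ii}}$ of \eqref{GZBC} is a combination of group--like elements, so it acts diagonally on $V\otimes V$ and may be discarded; what remains is to find the off--diagonal part of
$$
(q-q^{-1})^2 \sum_{1\le i<j\le n} q^{2j-2n-2}\,\bigl(q^{E_{ii}+E_{jj}}\otimes q^{E_{ii}+E_{jj}}\bigr)\,\Delta(E_{ij})\,\Delta(E_{ji}),
$$
using the coproduct formula \eqref{CoProd} and the explicit actions of the $E_{ab}$ and the $q^{cE_{aa}}$ recorded in the preceding Lemma and in \eqref{LusztigAction}--\eqref{LusztigAction2}.

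The combinatorial core is to decide which monomials survive. Write $\Delta(E_{ij})=A_1+A_2+A_3$ and $\Delta(E_{ji})=B_1+B_2+B_3$ for the three summands of \eqref{CoProd} (in the order listed) and expand $\Delta(E_{ij})\Delta(E_{ji})$ into the nine products $A_aB_b$. The products $A_1B_2$ and $A_2B_1$ are entirely diagonal (for instance $A_1B_2=E_{ij}E_{ji}\otimes q^{E_{jj}-E_{ii}}$, and $E_{ij}E_{ji}$ scales $v_\mu$); within the remaining seven one keeps only the monomials that move a single particle across the bond. Tracking the induced weight changes, the entry $h^{x,x+1}((\mu_{i\to j},\lambda_{j\to i}),(\mu,\lambda))$ receives contributions exactly from: $A_2B_2$ for the pair $(i,j)$; $A_3B_2$ for the pairs $(i',j)$ with $i'<i$, with the summation index of $A_3$ frozen at $r=i$; $A_2B_3$ for the pairs $(i,j')$ with $j'>j$, index of $B_3$ frozen at $r=j$; and $A_3B_3$ for the pairs $(i',j')$ with $i'<i<j<j'$, indices frozen at $i$ and $j$. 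The other entry $h^{x,x+1}((\mu_{j\to i},\lambda_{i\to j}),(\mu,\lambda))$ is handled identically, now with $A_1B_1$, $A_1B_3$, $A_3B_1$, $A_3B_3$ playing the four roles.

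The final step is evaluation and resummation. Acting a surviving monomial on $v_\mu\otimes v_\lambda$, the outer factors $q^{E_{ii}+E_{jj}}$ combine with the Cartan prefactors inside \eqref{CoProd} so that, on the first tensor slot, the $j'$--dependent prefactors cancel, while on the second slot the $i'$--dependent ones cancel. Hence the whole entry factors as (the $(i,j)$ main term, which is read off at once from \eqref{LusztigAction2}) times a sum over $i'$ affecting only the $\mu$--slot times a sum over $j'$ affecting only the $\lambda$--slot; the cross--terms carry, besides the ``main'' $q^2$--integers $\Q{\mu_i}$ and $\Q{\lambda_j}$, extra $q^2$--integer factors indexed by $i'$ and/or $j'$. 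The sum over $i'\in\{1,\dots,i-1\}$ then collapses to $q^{2(\mu_1+\cdots+\mu_{i-1})}$ by the telescoping identity \eqref{Telescope} (and $1-(1-q^2)\Q{M}=q^{2M}$), and the sum over $j'\in\{j+1,\dots,n\}$ collapses to $q^{2(\lambda_{j+1}+\cdots+\lambda_n)}$ by \eqref{Telescope2} (and \eqref{Projection}). Dividing off the overall $(q-q^{-1})^2$ and simplifying powers via $q^{m-1}\q{m}=\Q{m}$ then produces the two displayed formulas.

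The main obstacle is the two middle steps: keeping faithful track of every Cartan prefactor and every power of $q-q^{-1}$ emitted by \eqref{CoProd}, so that the $i'$-- and $j'$--sums align exactly with \eqref{Telescope}--\eqref{Telescope2}, and verifying that no monomial outside the four families listed above can contribute to either target entry. There is no conceptual difficulty here, only a finite and somewhat lengthy manipulation of $q$--integers.
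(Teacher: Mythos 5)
Your proposal is correct and follows essentially the same route as the paper: expand $\Delta(C)$ via \eqref{CoProd}, discard the diagonal contributions, identify for each target entry the four families of surviving monomials (the pair $(i,j)$ itself plus pairs $(i',j)$, $(i,j')$ and $(i',j')$ with the intermediate indices frozen at $i$ and $j$), evaluate via \eqref{LusztigAction2}, and collapse the $i'$-- and $j'$--sums by the telescoping identities \eqref{Telescope} and \eqref{Telescope2}. The paper organizes the same bookkeeping by grouping terms according to which source pair $\Delta(E_{rs}E_{sr})$ they come from rather than by the nine--fold expansion into products $A_aB_b$, but the classification of contributing monomials and the final factorization and resummation are identical.
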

\begin{proof}
Recall that each $h^{x,x+1}$ is defined from $\Delta(C)$ where $C$ is the central element from \eqref{GZBC}. Because the matrix elements of $h^{x,x+1}$ are defined with respect to the basis $\{v_{\mu} \otimes v_{\lambda}\}$, the proposition will be shown by finding the coefficients of $\Delta(C) \cdot (v_{\mu} \otimes v_{\lambda})$ with respect to the basis $\{v_{\mu} \otimes v_{\lambda}\}$.

When applying $\Delta(C)$, there will be many terms, which need to be grouped appropriately in order to find these coefficients. First do some bookkeeping to make sure that all the terms are accounted for. Applying \eqref{CoProd} to each $E_{ij}E_{ji}$ term in \eqref{GZBC} results in $(j-i+1)^2$ terms. However, terms that contribute to the diagonal entries, such as $E_{ij}E_{ji} \otimes 1$ will not be needed. Therefore the total number of terms relevant to computing the off--diagonal entries is 
$$
\sum_{1 \leq i < j \leq n} \left( (j-i+1)^2 - (j-i+1) \right) = \sum_{1 \leq i < j \leq n} \Big( 2(i-1)(n-j) + 2 + 2(n-j) + 2(i-1) \Big).
$$
With the terms grouped in the second sum, the sum counts the terms of the form (where $1 \leq r<i<j<s \leq n$)
$$
E_{ri}E_{jr} \otimes E_{is}E_{sj}, \quad E_{ij}\otimes E_{ji}, \quad E_{ij} \otimes E_{js}E_{si},\quad E_{ji}\otimes E_{rj} E_{ir},
$$
where these expressions come from $\Delta(E_{rs}E_{sr}), \Delta(E_{ij}E_{ji}), \Delta(E_{is}E_{si}),\Delta(E_{ri}E_{ir})$, respectively. With this grouping, all the terms in each group will contribute to the same coefficient.

So in order to calculate the first line of the proposition, using $q^{2n+2}$ times the central element from \eqref{GZBC}, apply the expression 
\begin{multline*}
q^{2j}q^{2E_{ii}}E_{ji}\otimes E_{ij}q^{2E_{jj}} + (q-q^{-1})\sum_{s=j+1}^n q^{2s}q^{2E_{ii}}E_{ji} \otimes E_{is}q^{E_{jj}+E_{ss}}E_{sj} \\
+ (q-q^{-1})q^{2j}\sum_{r=1}^{i-1} q^{E_{ii}+E_{rr}}E_{ri}E_{jr} \otimes q^{E_{jj}+E_{rr}}E_{ij}q^{E_{jj}-E_{rr}} \\
+ (q-q^{-1})^2\sum_{r=1}^{i-1}\sum_{s=j+1}^n q^{2s}q^{E_{ii}+E_{rr}}E_{ri}E_{jr} \otimes q^{E_{rr}+E_{ss}}E_{is}q^{E_{jj}-E_{rr}}E_{sj}
\end{multline*}
to $v_{\mu}\otimes v_{\lambda}$.
By applying $q^{E_{ii}}E_{ij}=q E_{ij}q^{E_{ii}}$ and $q^{E_{jj}}E_{ij} = q^{-1}E_{ij}q^{E_{jj}}$, this expression then equals 
\begin{multline*}
q^{2j+2}q^{2E_{ii}}E_{ji}\otimes q^{2E_{jj}} E_{ij}+ (q-q^{-1})\sum_{s=j+1}^n q^{2s+1}q^{2E_{ii}}E_{ji} \otimes q^{E_{ss}} E_{is}q^{E_{jj}}E_{sj} \\
+ (q-q^{-1})q^{2j+2}\sum_{r=1}^{i-1} q^{E_{ii}}E_{ri} q^{E_{rr}}E_{jr} \otimes q^{2E_{jj}}E_{ij}
+ (q-q^{-1})^2\sum_{r=1}^{i-1}\sum_{s=j+1}^n q^{2s+1}q^{E_{ii}}E_{ri}q^{E_{rr}}E_{jr} \otimes q^{E_{ss}}E_{is}q^{E_{jj}}E_{sj}.
\end{multline*}
When applied to $v_{\mu}\otimes v_{\lambda}$, by \eqref{LusztigAction2} the result is $v_{\mu_{i \rightarrow j}} \otimes v_{\lambda_{j \rightarrow i}}$ with coefficient
\begin{align*}
&q^{2j}  q^{\mu_{i}+\ldots+\mu_{j-1}}\Q{\mu_i}\cdot q^{\lambda_{i+1}+\ldots+\lambda_{j}}\Q{\lambda_j}\\
&+ (q-q^{-1})\sum_{s=j+1}^n q^{2s}q^{\mu_{i}+\ldots+\mu_{j-1}}\Q{\mu_i}  \cdot q^{\lambda_{i+1}+\ldots+(\lambda_j-1)+\ldots+\lambda_{s-1}}\Q{\lambda_s+1}  q^{\lambda_{j+1}+\ldots+\lambda_{s-1}}\Q{\lambda_j}\\
&+ (q-q^{-1})q^{2j+1}\sum_{r=1}^{i-1}  q^{\mu_{r+1}+\ldots+\mu_{i-1}}\Q{\mu_i} q^{\mu_{r+1}+\ldots+\mu_{j-1}} \Q{\mu_r} \cdot q^{\lambda_{i+1}+\ldots+\lambda_{j}}\Q{\lambda_j} \\
&+ (q-q^{-1})^2\sum_{r=1}^{i-1}\sum_{s=j+1}^n q^{2s+1} q^{\mu_{r+1}+\ldots+\mu_{i-1}} \Q{\mu_i} q^{\mu_{r+1}+\ldots+\mu_{j-1}}\Q{\mu_r} \\
& \quad \quad \quad \times  q^{\lambda_{i+1}+\ldots+(\lambda_j-1)+\ldots+\lambda_{s-1}}\Q{\lambda_s+1} q^{\lambda_{j+1}+\ldots+\lambda_{s-1}}\Q{\lambda_j}.
\end{align*}
Note that the $(\lambda_j-1)$ occurs in the second and fourth lines because $E_{sj}$ reduces $\lambda_j$ by $1$. Now factor the entire coefficient as 
\begin{multline*}
q^{\mu_i+\mu_{i+1}+\ldots+\mu_{j-1}}\Q{\mu_i} q^{\lambda_{i+1}+\ldots+\lambda_{j}} \Q{\lambda_j} \\
\times \left(q^{2j}+(q-q^{-1})\sum_{s=j+1}^n q^{2s-1} q^{2(\lambda_{j+1}+\ldots+\lambda_{s-1})} \Q{\lambda_s+1} \right)\left(1+(q-q^{-1}) \sum_{r=1}^{i-1} q\cdot q^{2(\mu_{r+1}+\ldots+\mu_{i-1})}\Q{\mu_r}\right).
\end{multline*}
The expression in the first parentheses can be simplified by using \eqref{Telescope2}, and the second parentheses can be simplified by using \eqref{Telescope}.
This therefore yields in total
$$
q^{\mu_i+\mu_{i+1}+\ldots+\mu_{j-1}}\Q{\mu_i} q^{\lambda_{i+1}+\ldots+\lambda_{j}} \Q{\lambda_j} \cdot q^{2n}q^{2(\lambda_{j+1}+\ldots+\lambda_n)} \cdot q^{2(\mu_1+\ldots+\mu_{i-1})}.
$$
Re--inserting the $q^{-2n-2}$ completes the first line in the Proposition.

Now for the second line. For $ r<i<j<s $, by similar reasoning, the grouping yields the expression 
\begin{multline*}
q^{2j}q^{E_{ii}+E_{jj}} E_{ij}\otimes q^{E_{ii}+E_{jj}} E_{ji} + (q-q^{-1})\sum_{s=j+1}^n q^{2s} q^{E_{ii} +E_{jj}} E_{ij} \otimes q^{E_{ii}+E_{ss}} E_{js}E_{si} \\
+ (q-q^{-1})q^{2j}\sum_{r=1}^{i-1}  q^{E_{rr}+E_{jj}}E_{rj} E_{ir} \otimes q^{E_{rr+}E_{jj}} q^{E_{ii}-E_{rr}}E_{ji} \\
+ (q-q^{-1})^2\sum_{r=1}^{i-1}\sum_{s=j+1}^n q^{2s} q^{E_{rr+}E_{ss}} q^{E_{jj}-E_{ss}}E_{rj}E_{ir} \otimes  q^{E_{rr+}E_{ss}} E_{js} q^{E_{ii}-E_{rr}}E_{si}.
\end{multline*}
Again by applying $q^{E_{ii}}E_{ij}=q E_{ij}q^{E_{ii}}$ and $q^{E_{jj}}E_{ij} = q^{-1}E_{ij}q^{E_{jj}}$, this then equals 
\begin{multline*}
q^{2j}q^{E_{ii}+E_{jj}} E_{ij}\otimes q^{E_{ii}+E_{jj}} E_{ji} + (q-q^{-1})\sum_{s=j+1}^n q^{2s} q^{E_{ii} +E_{jj}} E_{ij} \otimes q^{E_{ss}} E_{js} q^{E_{ii}}E_{si} \\
+ (q-q^{-1})q^{2j+1}\sum_{r=1}^{i-1}  q^{E_{jj}}E_{rj} q^{E_{rr}}E_{ir} \otimes q^{E_{ii}+E_{jj}}E_{ji} \\
+ (q-q^{-1})^2\sum_{r=1}^{i-1}\sum_{s=j+1}^n q^{2s+1} q^{E_{jj}} E_{rj} q^{E_{rr}}E_{ir} \otimes  q^{E_{ss}} E_{js} q^{E_{ii}} E_{si}.
\end{multline*}
Again by \eqref{LusztigAction2}, the coefficient of $v_{\mu_{j\rightarrow i}} \otimes v_{\lambda_{i \rightarrow j}}$ when applied to $v_{\mu} \otimes v_{\lambda}$ is
\begin{align*}
&q^{2j+2} q^{\mu_{i}+\ldots+\mu_{j-1}}\Q{\mu_j} q^{\lambda_{i+1}+\ldots+\lambda_{j}} \Q{\lambda_i} \\
&+ (q-q^{-1})\sum_{s=j+1}^n q^{2s+1} q^{\mu_i + \ldots + \mu_{j-1}}\Q{\mu_j} \cdot q^{\lambda_{j+1}+\ldots+\lambda_{s-1}}\Q{\lambda_s+1} q^{\lambda_{i+1}+\ldots+\lambda_{s-1}} \Q{\lambda_i} \\
&+(q-q^{-1}) q^{2j+2} \sum_{r=1}^{i-1} q^{\mu_{r+1}+\ldots+(\mu_{i}+1)+\ldots+\mu_{j-1}}\Q{\mu_j} q^{\mu_{r+1}+\ldots+\mu_{i-1}}\Q{\mu_i} q^{\lambda_{i+1}+\ldots+\lambda_j}\Q{\lambda_i}\\
&+ (q-q^{-1})^2\sum_{r=1}^{i-1}\sum_{s=j+1}^n q^{2s+1} q^{\mu_{r+1}+\ldots+(\mu_i+1)+\ldots+\mu_{j-1}} \Q{\mu_j} q^{\mu_{r+1}+\ldots+\mu_{i-1}} \Q{\mu_r} \\
& \quad \quad \quad \times q^{\lambda_{j+1}+\ldots+\lambda_{s-1}} \Q{\lambda_s+1} q^{\lambda_{i+1}+\ldots+\lambda_{s-1}} \Q{\lambda_i},
\end{align*}
which factors as 
\begin{multline*}
q^{2}q^{\mu_{i}+\ldots+\mu_{j-1}}\Q{\mu_j} q^{\lambda_{i+1}+\ldots+\lambda_{j}} \Q{\lambda_i} \\
\times \left( q^{2j} + (q-q^{-1}) \sum_{s=j+1}^n q^{2s-1} q^{2(\lambda_{j+1}+\ldots+\lambda_{s-1})} \Q{\lambda_s+1} \right)\left( 1 + (q-q^{-1})\sum_{r=1}^{i-1} q \cdot q^{2(\mu_{r+1}+\ldots+\mu_{i-1})}\Q{\mu_r}\right),
\end{multline*}
which equals 
$$
q^{2n+2}h^{x,x+1}((\mu_{j\rightarrow i},\lambda_{i\rightarrow j}),(\mu,\lambda)) = q^{2}q^{\mu_{i}+\ldots+\mu_{j-1}}\Q{\mu_j} q^{\lambda_{i+1}+\ldots+\lambda_{j}} \Q{\lambda_i}  \cdot q^{2n}q^{2(\lambda_{j+1}+\ldots+\lambda_n)} \cdot q^{2(\mu_1+\ldots+\mu_{i-1})}.
$$
This completes the proof.
\end{proof}
If one defines $B$ to be the diagonal matrix with diagonal entries
\begin{equation}\label{B}
B(\xi,\xi)= \prod_{x=1}^L \left(\sqrt{\Q{\xi^x_1}^!\cdots \Q{\xi^x_n}^!} \right)^{-1},
\end{equation}
then the conjugated Hamiltonian $B^{-1}HB$ is self--adjoint. Indeed, the entries are 
\begin{multline*}
B^{-1}HB((\mu_{i\rightarrow j},\lambda_{j\rightarrow i}),(\mu,\lambda)) \\
= q^{\mu_i+\mu_{i+1}+\ldots+\mu_{j-1}} \sqrt{ \Q{\mu_j+1}\Q{\mu_i} } \cdot q^{\lambda_{i+1}+\ldots+\lambda_{j}} \sqrt{ \Q{\lambda_i+1}  \Q{\lambda_j} }\cdot q^{2(\lambda_{j+1}+\ldots+\lambda_n)} \cdot q^{2(\mu_1+\ldots+\mu_{i-1})}
\end{multline*}
and 
\begin{multline*}
B^{-1}HB((\mu_{j\rightarrow i},\lambda_{i\rightarrow j}),(\mu,\lambda)) \\
= q^{2}q^{\mu_{i}+\ldots+\mu_{j-1}} \sqrt{ \Q{\mu_i+1}\Q{\mu_j} }  \cdot q^{\lambda_{i+1}+\ldots+\lambda_{j}} \sqrt{ \Q{\lambda_j+1}\Q{\lambda_i} } \cdot  q^{2(\lambda_{j+1}+\ldots+\lambda_n)} \cdot q^{2(\mu_1+\ldots+\mu_{i-1})}.
\end{multline*}
Making the replacement $\mu\mapsto \mu_{i\rightarrow j}$ and $\lambda \mapsto \lambda_{j\rightarrow i}$ in the second line, it is straightforward to see that the entries are equal. 

\subsection{Ground state transformation from Hamiltonian to Generator}
A ground state transformation will be applied to the quantum Hamiltonian in order to obtain a Markov generator.  Recall the definitions of $S$ and $g$ from Section \ref{GST}, and that $G$ is the diagonal matrix whose entries are the coefficients of $g$. 

For the next proofs, a bit more notation is needed. Note that due to the closed boundary conditions, the quantities
$$
M_i(\eta) = \eta_i^1 + \ldots + \eta_i^L
$$
representing the total number of $i$th class particles are conserved. As before, for $1\leq a \leq b \leq n$, let
$$
M_{[a,b]} (\eta)= M_a (\eta)+ \ldots + M_b (\eta).
$$
Note that if $D(\eta,\xi)$ is a duality function, and $c(\eta,\xi)$ is a conserved quantity under the dynamics, then $c(\eta,\xi)D(\eta,\xi)$ is also a duality.  The next identities concerning conserved quantities will be used in the proofs.

\begin{equation}\label{CQ1}
(M_{[1,i]}(\eta) - M_{[1,i]}(\xi))^2 = \left(\sum_{x=1}^L [ \eta_{[1,i]}^x - \xi_{[1,i]}^x ] \right)^2 = \sum_{x=1}^L ( \eta_{[1,i]}^x - \xi_{[1,i]}^x )^2  + 2 \sum_{1 \leq y < x \leq L}  ( \eta_{[1,i]}^y - \xi_{[1,i]}^y )( \eta_{[1,i]}^x - \xi_{[1,i]}^x )
\end{equation}
\begin{equation}\label{CQ2}
4j^2= \left( \sum_{i=1}^n \xi_i^x \right)^2 = \sum_{i=1}^n \left[ (\xi_i^x)^2 + 2\xi_i^x \xi_n^x \right] + 2\sum_{1\leq i < j \leq n} \xi_i^x \xi_j^x
\end{equation}
\begin{equation}\label{CQ3}
\sum_{1 \leq i < j \leq n-1} M_i(\xi)M_j(\xi) = \sum_{x=1}^L \sum_{z=1}^L \left( \sum_{1 \leq i < j \leq n-1} \xi_i^x \xi_j^z   \right)  = \sum_{x=1}^L \left( \sum_{1 \leq i < j \leq n-1} \xi_i^x \xi_j^x  + \sum_{z=x+1}^L \sum_{1 \leq i \neq j \leq n-1} \xi_i^x \xi_j^z   \right) 
\end{equation}

\begin{proposition}\label{S}  The expression $S(\eta,\xi)$ equals

$$
\mathrm{const} \cdot \prod_{x=1}^L  \prod_{i=1}^{n-1} 1_{\eta^x_{[1,i]} \geq \xi^x_{[1,i]}} \binomq{\eta_{[1,i+1]}^x - \xi_{[1,i]}^x}{ \eta_{i+1}^x} 
\times \prod_{1 \leq y < x \leq L} \prod_{i=1}^{n-1} \left( q^{\xi_i^y -  \eta_{i+1}^y  }  \right)^{  \eta_{[1,i]}^x - \xi_{[1,i]}^x  }  .
$$
where $\mathrm{const}$ is a conserved quantity under the dynamics.

In particular, $G(\eta)$ equals 
$$
\mathrm{const} \cdot \prod_{i=1}^{n} \prod_{x=1}^L \frac{1}{ \q{\eta_i^x}^!}
 \times \prod_{1 \leq y < x \leq L} \prod_{i=1}^{n-1}  q^{ - \eta_{i+1}^y  \eta_{[1,i]}^x } .
$$
\end{proposition}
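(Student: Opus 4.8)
The plan is to compute $g=S\Omega$ by applying the operators $\exp_{q^2}(\Delta^{(L-1)}E_{i,i+1})$ to the vacuum one at a time, in the order $i=n-1,n-2,\dots,1$, and to read off the coefficient of $v_\eta$; more precisely, to obtain $S(\eta,\xi)$ one applies $S$ to an arbitrary basis vector $v_\xi$, and $G(\eta)$ is the specialization of $S(\eta,\xi)$ at $\xi=\Omega$. For fixed $i$, the summands $A_x:=(q^{E_{ii}-E_{i+1,i+1}})^1\cdots(q^{E_{ii}-E_{i+1,i+1}})^{x-1}E_{i,i+1}^x$ of $\Delta^{(L-1)}E_{i,i+1}$ satisfy $A_xA_y=q^2A_yA_x$ for $y<x$, which follows from $q^{E_{ii}-E_{i+1,i+1}}E_{i,i+1}=q^2E_{i,i+1}q^{E_{ii}-E_{i+1,i+1}}$; hence $\exp_{q^2}(\Delta^{(L-1)}E_{i,i+1})=\exp_{q^2}(A_1)\cdots\exp_{q^2}(A_L)$ (Proposition 5.1 of \cite{CGRS}). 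Applied to a vector, the site-$L$ piece acts first, so that when the site-$x$ piece acts the sites $1,\dots,x-1$ have not yet been modified at this stage, and each local piece $\exp_{q^2}(A_x)$ acts on a single tensor slot through \eqref{Binomial} while multiplying by the $q^{E_{ii}-E_{i+1,i+1}}$-eigenvalues of the slots to its left.

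First I would record the combinatorial skeleton. Let $k_i^x$ denote the number of particles moved from species $i+1$ to species $i$ at site $x$ during stage $i$. Since species $i$ is altered only by stages $i$ and $i-1$, requiring the final configuration to be $\eta$ forces $\eta_i^x=\xi_i^x+k_i^x-k_{i-1}^x$ (with $\eta_1^x=\xi_1^x+k_1^x$ and $\eta_n^x=\xi_n^x-k_{n-1}^x$), whose unique solution is $k_i^x=\eta_{[1,i]}^x-\xi_{[1,i]}^x$; the $i=n$ equation is automatically consistent because $\sum_i\eta_i^x=\sum_i\xi_i^x=2j$, and $S(\eta,\xi)=0$ unless every $k_i^x\ge 0$, which is exactly the product of indicators $1_{\eta_{[1,i]}^x\ge\xi_{[1,i]}^x}$. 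Furthermore, at the instant the site-$x$ piece of stage $i$ acts, the species-$(i+1)$ count at site $x$ equals $\xi_{i+1}^x+k_{i+1}^x=\eta_{[1,i+1]}^x-\xi_{[1,i]}^x$, while at any site $y<x$ the species-$(i+1)$ count is still $\eta_{[1,i+1]}^y-\xi_{[1,i]}^y$ and the species-$i$ count is still $\xi_i^y$.

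Next I would assemble the coefficient. By \eqref{Binomial}, the site-$x$ piece of stage $i$ contributes the $m=k_i^x$ term $q^{k_i^x/2}q^{-(k_i^x)^2/2}\binomq{\eta_{[1,i+1]}^x-\xi_{[1,i]}^x}{k_i^x}$, and since $(\eta_{[1,i+1]}^x-\xi_{[1,i]}^x)-k_i^x=\eta_{i+1}^x$ this equals $\binomq{\eta_{[1,i+1]}^x-\xi_{[1,i]}^x}{\eta_{i+1}^x}$, which over all $i,x$ is the $q$-binomial product of the claim. Each scalar $(q^{E_{ii}-E_{i+1,i+1}})^y$ with $y<x$ contributes $q$ to the power $k_i^x\bigl(\xi_i^y-(\eta_{[1,i+1]}^y-\xi_{[1,i]}^y)\bigr)=k_i^x\bigl((\xi_i^y-\eta_{i+1}^y)-k_i^y\bigr)$; the product over $y<x$ and $i$ of $q^{k_i^x(\xi_i^y-\eta_{i+1}^y)}$ is precisely $\prod_{y<x}\prod_i(q^{\xi_i^y-\eta_{i+1}^y})^{\eta_{[1,i]}^x-\xi_{[1,i]}^x}$, the remaining $q$-power factor of the claim, while the leftover $\prod_{y<x}q^{-k_i^xk_i^y}$ together with the $\prod_x q^{k_i^x/2-(k_i^x)^2/2}$ from \eqref{Binomial} has exponent $\tfrac12\sum_x k_i^x-\tfrac12\sum_x(k_i^x)^2-\sum_{y<x}k_i^xk_i^y$, which by \eqref{CQ1} equals $\tfrac12\sum_x k_i^x-\tfrac12\bigl(\sum_x k_i^x\bigr)^2=\tfrac12(M_{[1,i]}(\eta)-M_{[1,i]}(\xi))-\tfrac12(M_{[1,i]}(\eta)-M_{[1,i]}(\xi))^2$; this is conserved, and the product of these over $i$ is the claimed constant. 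For the last assertion, $G(\eta)=S(\eta,\Omega)$ with $\xi_i^x=0$ for $i\le n-1$, which removes all indicators and all explicit $\xi$-dependence, and the telescoping identity $\prod_{i=1}^{n-1}\binomq{\eta_{[1,i+1]}^x}{\eta_{i+1}^x}=\q{2j}^!\,\prod_{i=1}^n\q{\eta_i^x}^{-1}$ (using $\eta_{[1,n]}^x=2j$) converts the $q$-binomial product into $\prod_{i=1}^n\q{\eta_i^x}^{-1}$ up to a constant.

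I expect the principal obstacle to be the index accounting in the middle steps: pinning down which intermediate configuration each scalar factor $q^{E_{ii}-E_{i+1,i+1}}$ sees — in particular the claim that during stage $i$ a not-yet-processed site $y$ still carries species-$(i+1)$ count $\eta_{[1,i+1]}^y-\xi_{[1,i]}^y$ and species-$i$ count $\xi_i^y$ — and checking that the half-integer and quadratic $q$-exponents left over after isolating the stated product genuinely reorganize, via \eqref{CQ1}, into quantities depending only on the conserved totals $M_{[1,i]}$. The algebraic input is already in hand (the $q^2$-exponential factorization and the single-slot action \eqref{Binomial}), so the remaining work is bookkeeping.
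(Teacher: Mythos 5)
Your proof is correct and follows essentially the same route as the paper: expand $S$ via the $q^2$--exponential factorization of Proposition 5.1 of \cite{CGRS}, read off the $E_{i,i+1}$ contributions through \eqref{Binomial} applied to the intermediate configurations, collect the $q^{E_{ii}-E_{i+1,i+1}}$ eigenvalues at the not--yet--processed sites, and absorb the leftover quadratic exponents into a conserved constant via \eqref{CQ1}, then specialize $\xi=\emptyset$ and telescope the binomials for $G$. The only cosmetic slip is in the final telescoping identity, where the denominator should consist of the $q$--factorials $\q{\eta_i^x}^!$ rather than $\q{\eta_i^x}$.
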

\begin{proof}
In the definition of $S$, there are contributions from terms of the form $E_{i,i+1}$ and contributions from the terms of the form $q^{E_{ii}-E_{i+1,i+1}}$. The first step is to calculate the contributions from the $E_{i,i+1}$ terms. For each $x$ and $i$, apply \eqref{Binomial} with 
$$
m=(\eta_i^x - \xi_i^x) + \ldots + (\eta_1^x - \xi_1^x)
$$ and 
$$
\mu=(\xi_1^x , \ldots,  \xi_i^x  ,  \eta_{i+1}^x + (\eta_i^x - \xi_i^x) + \ldots + (\eta_1^x - \xi_1^x), \eta_{i+2}^x, \ldots, \eta_n^x   )
$$
to get 
$$
  q^{- \left( \eta_{[1,i]}^x - \xi_{[1,i]}^x \right) \left( \eta_{[1,i]}^x - \xi_{[1,i]}^x -1 \right) /2}   \binomq{\eta_{i+1}^x + \eta_{[1,i]}^x - \xi_{[1,i]}^x  }{\eta_{i+1}^x}.
$$
This leaves
$$
 \prod_{i=1}^{n-1}  q^{(M_{[1,i]}(\eta) - M_{[1,i]}(\xi))/2}  \prod_{x=1}^L \binomq{\eta_{[1,i+1]}^x - \xi_{[1,i]}^x}{ \eta_{i+1}^x}   \times \prod_{x=1}^L \prod_{i=1}^{n-1}  q^{-( \eta_{[1,i]}^x - \xi_{[1,i]}^x)^2/2 } .
$$
And now the contributions from the $q^{E_{ii}-E_{i+1,i+1}}$ terms are  
$$
\prod_{1 \leq y < x \leq L} \prod_{i=1}^{n-1} \left( q^{\xi_i^y - (\eta_{i+1}^y + (\eta_1^y - \xi_1^y) + \ldots + (\eta_i^y - \xi_i^y))} \right)^{(\eta_1^x - \xi_1^x) + \ldots + (\eta_i^x - \xi_i^x)}.
$$
Multiplying the two sets of contributions together, and using \eqref{CQ1}, yields the expression for $S(\eta,\xi)$.

Now by plugging in $\xi=\emptyset$,
$$
G(\eta)=C(\eta,\emptyset) \prod_{x=1}^L  \prod_{i=1}^{n-1}  \binomq{\eta_{[1,i+1]}^x  }{ \eta_{i+1}^x}  \times \prod_{x=1}^L \prod_{i=1}^{n-1}  q^{-( \eta_{[1,i]}^x )^2/2 }  
\times \prod_{1 \leq y < x \leq L} \prod_{i=1}^{n-1} \left( q^{ - \eta_{i+1}^y } \right)^{\eta_{[1,i]}^x }.
$$
Note that the binomial terms are a telescoping product
$$
\prod_{i=1}^{n-1} \binomq{\eta_{[1,i+1]}^x}{\eta_{i+1}^x} = \prod_{i=1}^{n-1} \frac{   \q{\eta_{[1,i+1]}^x}^!  }{    \q{\eta_{[1,i]}^x}^! \q{\eta_{i+1}^x}^!   }  =  \q{2j}^! \prod_{i=1}^{n} \frac{1}{ \q{\eta_i^x}^!}.
$$
\end{proof}

Note that setting $n=2$ yields a different expression than Lemma 5.5 of \cite{CGRS} because of the differing basis, due to this Hamiltonian not being self--adjoint.

\begin{proposition}\label{Gen} The generator $G^{-1}HG$ can be written as
$$
\mathcal{L} = \sum_{x=1}^L \mathcal{L}^{x,x+1},
$$
where for $1 \leq i < j \leq n$, 
\begin{align*}
\mathcal{L}^{x,x+1}((\mu,\lambda),(\mu_{j\rightarrow i},\lambda_{i\rightarrow j})) &= q \cdot q^{2(\mu_1+\ldots+\mu_{j-1})} \Q{\mu_j} \cdot q^{2(\lambda_{i+1} + \ldots + \lambda_n)} \Q{\lambda_i},\\
\mathcal{L}^{x,x+1}((\mu,\lambda),(\mu_{i\rightarrow j},\lambda_{j\rightarrow i})) &= q^{-1} \cdot q^{2(\mu_1+\ldots+\mu_{i-1})}\Q{\mu_i} \cdot q^{2(\lambda_{j+1}+\ldots+\lambda_n)} \Q{\lambda_j}.\\    
\end{align*}
\end{proposition}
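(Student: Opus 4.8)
The plan is to compute the entries of $\mathcal{L}=G^{-1}HG$ directly from Propositions \ref{H} and \ref{S}. Since $G$ is diagonal, $\mathcal{L}(a,b)=\big(G(b)/G(a)\big)H(a,b)$, and since a pair of configurations $a,b$ of either transition type in the statement differs only at the sites $x,x+1$, the only local term $h^{z,z+1}(a,b)$ that can be nonzero is $h^{x,x+1}(a,b)$ (any other requires $a$ and $b$ to agree at both $x$ and $x+1$), so $H(a,b)=h^{x,x+1}(a,b)$. I would first read off $h^{x,x+1}(a,b)$ from Proposition \ref{H}, being careful that it records $h^{x,x+1}$ with the \emph{image} configuration as its first argument: to obtain $h^{x,x+1}((\mu,\lambda),(\mu_{i\to j},\lambda_{j\to i}))$ one applies the second displayed line of Proposition \ref{H} after the substitution $(\mu,\lambda)\mapsto(\mu_{i\to j},\lambda_{j\to i})$ (using $(\mu_{i\to j})_{j\to i}=\mu$ and $(\lambda_{j\to i})_{i\to j}=\lambda$), and similarly the first line, under the analogous substitution, yields $h^{x,x+1}((\mu,\lambda),(\mu_{j\to i},\lambda_{i\to j}))$. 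The conserved-quantity prefactor ``$\mathrm{const}$'' from Proposition \ref{S} is common to $a$ and $b$, since a single jump preserves every $M_r$, so it cancels in the ratio.

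The hard part will be the ratio $G(b)/G(a)$ of the two-factor expression in Proposition \ref{S}. The $q$-factorial factor $\prod_{r,x} 1/\q{\eta_r^x}^!$ only involves the sites $x,x+1$ and produces a ratio of single $q$-integers; for the transition $(\mu,\lambda)\mapsto(\mu_{i\to j},\lambda_{j\to i})$ it is $\q{\mu_i}\q{\lambda_j}\big/\big(\q{\mu_j+1}\q{\lambda_i+1}\big)$, from $\mu_i\mapsto\mu_i-1,\ \mu_j\mapsto\mu_j+1$ at $x$ and $\lambda_i\mapsto\lambda_i+1,\ \lambda_j\mapsto\lambda_j-1$ at $x+1$. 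The long-range factor $\prod_{y<z}\prod_r q^{-\eta_{r+1}^y\eta_{[1,r]}^z}$ requires genuine bookkeeping. I would write the change in its exponent $-\sum_{y<z}\sum_r\eta_{r+1}^y\eta_{[1,r]}^z$ as a term linear in the perturbation of the ``left'' index, a term linear in the perturbation of the ``cumulative'' index, and a cross term, and then use that the perturbation is supported at $y,z\in\{x,x+1\}$ and that the cumulative sums $\eta_{[1,r]}^x$, $\eta_{[1,r]}^{x+1}$ change by $\mp1$ exactly for $i\le r<j$. The first piece telescopes because the ranges $\{z>x\}$ and $\{z>x+1\}$ differ only in $z=x+1$, the second telescopes because $\{y<x\}$ and $\{y\le x\}$ differ only in $y=x$, and the cross term is a single summand at $(y,z)=(x,x+1)$; the upshot should be that this factor contributes the pure power $q^{-1-(\mu_{i+1}+\cdots+\mu_j)-(\lambda_i+\cdots+\lambda_{j-1})}$ in this case.

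Finally I would assemble the pieces: multiply $h^{x,x+1}(a,b)$ by $G(b)/G(a)$, convert every $q$-integer to a $q$-deformed integer through $\Q{k}=q^{k-1}\q{k}$ (repackaging cumulative occupations with \eqref{Projection} where convenient), and check that the accumulated powers of $q$ — from the $q$-factorial ratio, the long-range exponent, and the conversions — exactly cancel the ``single-power'' factors $q^{\mu_i+\cdots+\mu_{j-1}}$ and $q^{\lambda_{i+1}+\cdots+\lambda_j}$ (up to the $\pm1$ from the relabelling) carried by $h^{x,x+1}$, leaving precisely $q^{-1}q^{2(\mu_1+\cdots+\mu_{i-1})}\Q{\mu_i}\,q^{2(\lambda_{j+1}+\cdots+\lambda_n)}\Q{\lambda_j}$. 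This settles the second displayed line; for the first line I would either repeat the computation with the roles of increments and decrements interchanged, or, alternatively, invoke the detailed-balance identity $\pi(a)\mathcal{L}(a,b)=\pi(b)\mathcal{L}(b,a)$ for $\pi=G^2B^{-2}$ (valid because $B^{-1}HB$ is self-adjoint, as shown after Proposition \ref{H}) together with the explicit forms of $G$ and $B$. Nonnegativity of the resulting off-diagonal entries is then immediate, since $q\in(0,1)$ forces $\Q{k}\ge0$, so $\mathcal{L}$ is a bona fide Markov generator.
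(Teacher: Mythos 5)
Your proposal is correct and follows essentially the same route as the paper: both compute $\mathcal{L}^{x,x+1}(a,b)$ as the local Hamiltonian entry from Proposition \ref{H} multiplied by the ratio of ground-state coefficients from Proposition \ref{S}, with the factorial part of $G$ giving the ratio of $q$-integers, the long-range part collapsing to the single pair $(y,z)=(x,x+1)$ plus telescoping boundary terms, and a final relabelling $\mu\mapsto\mu_{i\rightarrow j}$ (the paper substitutes at the end, you substitute into $h$ at the start — a cosmetic difference). Your computed exponents and the resulting entry $q^{-1}q^{2(\mu_1+\cdots+\mu_{i-1})}\Q{\mu_i}q^{2(\lambda_{j+1}+\cdots+\lambda_n)}\Q{\lambda_j}$ check out against the paper's.
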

\begin{proof}
Recall that by Propositions \ref{H} and \ref{S},
$$
h^{x,x+1}((\mu_{i\rightarrow j},\lambda_{j\rightarrow i}),(\mu,\lambda)) = q^{-2}q^{\mu_i+\mu_{i+1}+\ldots+\mu_{j-1}}\Q{\mu_i} q^{\lambda_{i+1}+\ldots+\lambda_{j}} \Q{\lambda_j} \cdot q^{2(\lambda_{j+1}+\ldots+\lambda_n)} \cdot q^{2(\mu_1+\ldots+\mu_{i-1})}
$$ 
and
\begin{align*}
\frac{ G(\mu,\lambda) }{ G(\mu_{i\rightarrow j},\lambda_{j\rightarrow i}) } &= \frac{\q{\mu_j+1}}{\q{\mu_i}}   \frac{\q{\lambda_i+1}}{\q{\lambda_j}} \times q^{ -\mu_i\lambda_{[1,i-1]} - \mu_{i+1} \lambda_{[1,i]} - \ldots - \mu_j\lambda_{[1,j-1]   } }\\
& \quad \quad  \times q^{ (\mu_i-1)\lambda_{[1,i-1]} + \mu_{i+1}(\lambda_{[1,i]}+1) + \ldots + \mu_{j-1}(\lambda_{[1,j-2]}+1)+(\mu_j+1)(\lambda_{[1,j-1]}+1)    }\\
&= \frac{\q{\mu_j+1}}{\q{\mu_i}}   \frac{\q{\lambda_i+1}}{\q{\lambda_j}}   q^{-\lambda_{[1,i-1]} + \mu_{i+1} + \ldots + \mu_{j} + \lambda_{[1,j-1]}+1},
\end{align*}
which implies that $\mathcal{L}^{x,x+1}((\mu_{i\rightarrow j},\lambda_{j\rightarrow i}),(\mu,\lambda))$ equals
$$
 q^{-3} q^{2(\mu_i+\mu_{i+1}+\ldots+\mu_{j-1})}q^{\mu_j}\q{\mu_j+1} q^{2(\lambda_{i+1}+\ldots+\lambda_{j-1})} q^{\lambda_i}\q{\lambda_i + 1} \cdot q^{2(\lambda_{j}+\ldots+\lambda_n)} \cdot q^{2(\mu_1+\ldots+\mu_{i-1})}.
$$
Replacing $\mu$ with $\mu_{j\rightarrow i}$ and $\lambda$ with $\lambda_{i\rightarrow j}$ finishes showing the first line. An identical argument holds for the second.
\end{proof}

\subsection{Markov Projections}
Recall the projections $\Pi^{\sigma}, \Pi^n_{i+1}$ and $\tilde{\Pi}^n_{i+1}$ defined in Section \ref{Desc}.

\begin{proposition}\label{MarkovProjection}
The projection of type $A_n$, spin $j$ ASEP under $\Pi^n_{i+1}$ and $\tilde{\Pi}^n_{i+1}$ is a type $A_{i+1}$, spin $j$ ASEP.  The projection of type $A_n$ $q$--TAZRP under $\Pi^n_{i+1}$ and $\tilde{\Pi}^n_{i+1}$ is a type $A_{i+1}$ $q$--TAZRP.

For any $\sigma:\{1,\ldots,n\} \rightarrow \{1,\ldots,i+1\}$, the projection of type $A_n$, spin $j$ SSEP under $\Pi_{\sigma}$ is a type $A_{i+1}$, spin $j$ SSEP.

\end{proposition}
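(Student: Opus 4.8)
The plan is to verify the strong lumpability criterion: a projection $\Pi^\sigma:\mathcal{S}(n,j,I)\to\mathcal{S}(m,j,I)$ pushes a Markov process with generator $\mathcal{L}$ forward to a Markov process on $\mathcal{S}(m,j,I)$ if and only if, for every $\bar\eta\neq\bar\zeta$ in $\mathcal{S}(m,j,I)$ and every $\eta,\eta'$ with $\Pi^\sigma\eta=\Pi^\sigma\eta'=\bar\eta$, the fiber-sums $\sum_{\zeta:\Pi^\sigma\zeta=\bar\zeta}\mathcal{L}(\eta,\zeta)$ and $\sum_{\zeta:\Pi^\sigma\zeta=\bar\zeta}\mathcal{L}(\eta',\zeta)$ agree; the common value is then the matrix entry $\bar{\mathcal{L}}(\bar\eta,\bar\zeta)$ of the projected generator. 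Non-negativity of the off-diagonal entries of $\bar{\mathcal{L}}$ and vanishing of its row sums are then automatic from the corresponding properties of $\mathcal{L}$, so $\bar{\mathcal{L}}$ is automatically a generator once lumpability is known. Since each of the three generators is a sum of nearest-neighbor pieces and $\Pi^\sigma$ acts site-by-site, it suffices to check the fiber-sum identity for a single bond $(x,x+1)$, using the explicit entries from Proposition \ref{Gen}, Definition \ref{Tak}, and Definition \ref{SSEP} respectively, and one can compare directly against the entries of the target generator of the same type on $\{1,\dots,m\}$.

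For type $A_n$, spin $j$ ASEP and the projection $\Pi^n_{p+1}$ (merging classes $p+1,\dots,n$ into class $p+1$), fix the bond and write the local configuration as $(\mu,\lambda)$. A transition of Proposition \ref{Gen} indexed by $1\le a<b\le n$ only permutes particles within the merged block, hence is invisible, precisely when $a\ge p+1$; such transitions contribute only to the diagonal of $\bar{\mathcal{L}}$ and need no separate check. A visible transition moves an $a$-class particle ($a\le p$) past a particle whose class projects to some $b'\in\{1,\dots,p+1\}$, and its fiber is the set of transitions $(a,b)$ with $\sigma(b)=b'$: for $b'\le p$ this is a single transition whose Proposition \ref{Gen} entry already coincides with the corresponding type $A_{p+1}$ entry, since every partial sum occurring in it involves only unmerged indices; for $b'=p+1$ one sums over $b\in\{p+1,\dots,n\}$, and the $(x+1)$-site factor $\sum_{b=p+1}^{n}q^{2(\lambda_{b+1}+\cdots+\lambda_n)}\Q{\lambda_b}$ collapses by \eqref{Telescope} to $\Q{\lambda_{p+1}+\cdots+\lambda_n}=\Q{(\Pi^n_{p+1}\lambda)_{p+1}}$, which is exactly the matching type $A_{p+1}$ entry. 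For the mirror (left-jump) term one first pulls out the prefactor $q^{2(\mu_1+\cdots+\mu_p)}$ and then applies \eqref{Telescope} to $\sum_{b}q^{2(\mu_{p+1}+\cdots+\mu_{b-1})}\Q{\mu_b}$. The projection $\tilde\Pi^n_{p+1}$ is handled identically, except that the merged block $\{p,\dots,n-1\}$ is interior, so one first factors out the unmerged tails ($q^{2\lambda_n}$ on the $(x+1)$-site, $q^{2(\mu_1+\cdots+\mu_{p-1})}$ on the $x$-site) before telescoping over $\{p,\dots,n-1\}$.

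The $q$-TAZRP statement is the same argument applied to Definition \ref{Tak}: the only transitions move one class-$i$ particle right into a hole, so for $\Pi^n_{p+1}$ every visible transition has a singleton fiber whose rate projects trivially (no identity needed), while for $\tilde\Pi^n_{p+1}$ the classes $p,\dots,n-1$ all merge and one again uses \eqref{Telescope} to sum $\sum_{i=p}^{n-1}q^{2(\xi_p^x+\cdots+\xi_{i-1}^x)}\Q{\xi_i^x}=\Q{\xi_{[p,n-1]}^x}$. Finally, for type $A_n$, spin $j$ SSEP and an arbitrary $\sigma:\{1,\dots,n\}\to\{1,\dots,m\}$, no $q$-identity is needed: by Definition \ref{SSEP} the rate at which the original process moves an $a$-class particle at $x$ to $x+1$ while moving a $b$-class particle at $x+1$ to $x$ equals $\xi_a^x\xi_b^{x+1}$ regardless of whether $a<b$ or $a>b$, so the fiber-sum of a visible projected transition with labels $(k',l')$ is $\big(\sum_{a\in\sigma^{-1}(k')}\xi_a^x\big)\big(\sum_{b\in\sigma^{-1}(l')}\xi_b^{x+1}\big)=(\Pi^\sigma\xi)_{k'}^x(\Pi^\sigma\xi)_{l'}^{x+1}$ by distributivity, which is precisely the type $A_m$ SSEP rate, while transitions with $\sigma(a)=\sigma(b)$ are invisible. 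Hence $\Pi^\sigma$ is Markov for every $\sigma$ in the symmetric case.

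The main obstacle is bookkeeping rather than any deep step: one must correctly enumerate the fiber of each projected transition and then keep precise track of which partial sums appear in the rate formulas so that the right instance of \eqref{Telescope} applies. The two points that require care are the asymmetry between the $\lambda$-side sums (which telescope from the top class downward at site $x+1$, with no prefactor) and the $\mu$-side sums (which need the unmerged low classes pulled out as a prefactor before telescoping at site $x$), and the distinction between merging a terminal block of classes, as in $\Pi^n_{p+1}$, and merging an interior block, as in $\tilde\Pi^n_{p+1}$. Once the cases are organized this way, each reduces to a single application of \eqref{Telescope}, \eqref{Projection}, or plain distributivity.
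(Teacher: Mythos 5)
Your proposal is correct and takes essentially the same route as the paper: both verify the lumpability/intertwining relation $\mathcal{L}_n^{x,x+1}\Pi^{\sigma}=\Pi^{\sigma}\mathcal{L}_{m}^{x,x+1}$ bond by bond, with the fiber sums collapsing via the $q$--integer addition identity. The only organizational difference is that the paper first reduces to merging two adjacent classes (using $\Pi^{\sigma_1\circ\sigma_2}=\Pi^{\sigma_1}\circ\Pi^{\sigma_2}$) so that only the two--term identity \eqref{Projection} is needed, whereas you merge the whole block at once via its iterated form \eqref{Telescope} -- the same computation with the induction relocated.
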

\begin{proof}
Observe that $\Pi^{\sigma_1 \circ \sigma_2} = \Pi^{\sigma_1} \circ \Pi^{\sigma_2}$. In particular, this implies that $\Pi^k_{i+1} \circ \Pi^n_k = \Pi^n_{i+1}$ and $\tilde{\Pi}^k_{i+1} \circ \tilde{\Pi}^n_k = \tilde{\Pi}^n_{i+1}$ for $i+1< k < n$. Therefore it suffices to prove the \textrm{ASEP} and $q$--TAZRP statements for $i+1=n-1$.

Letting $\mathcal{L}_n$ denote the generator of the type $A_n$, spin $j$ ASEP, the proposition follows by showing that $\Pi_{n-1}^n \circ \mathcal{L}_n = \mathcal{L}_{n-1} \circ \Pi_{n-1}^n$. By Proposition \ref{Gen}, $\mathcal{L}_n = \sum_{x=1}^L \mathcal{L}_n^{x,x+1}$, so it suffices to show that
$$
\Pi_{n-1}^n \circ \mathcal{L}_n^{x,x+1} = \mathcal{L}_{n-1}^{x,x+1} \circ \Pi_{n-1}^n,
$$ 
Letting $(\mu,\lambda)$ denote the $n$--species particle configurations at $x$ and $x+1$ respectively, it suffices to check that both sides are equal at the matrix entry with row indexed by $(\mu,\lambda)$ and column indexed by $(\Pi^n_{n-1}\mu)_{l\rightarrow k}, (\Pi^n_{n-1}\lambda)_{k\rightarrow l}$. If both $k$ and $l$ are strictly less than $n-1$, then this is immediate, which can be seen either computationally, or because the projection only affects $n-1$ and $n$th class particles. For $l=n-1$, observe that the left--hand--side is
$$
q\cdot q^{2(\lambda_{k+1} + \ldots + \lambda_{n})} \Q{\lambda_k} q^{2(\mu_1+\ldots+\mu_{n-1})} \Q{\mu_n} + q \cdot q^{2(\lambda_{k+1} + \ldots + \lambda_{n})} \Q{\lambda_k} q^{2(\mu_1+\ldots+\mu_{n-2})} \Q{\mu_{n-1}} 
$$
and the right--hand--side is
$$
q\cdot q^{2(\lambda_{k+1} + \ldots + \lambda_{n})} \Q{\lambda_k} q^{2(\mu_1+\ldots+\mu_{n-2})} \Q{\mu_{n-1}+\mu_n}. 
$$
The two sides are equal because by \eqref{Projection}, 
$$
 \Q{\mu_{n-1}+\mu_n} = q^{2\mu_{n-1}}\Q{\mu_n} +  \Q{\mu_{n-1}}.
$$
For $k=n-1$, the argument is identical. This implies that $\Pi^n_{n-1}$ maps type $A_n$, spin $j$ ASEP to its type $A_{n-1}$ version. Because the equality still holds as $j\rightarrow\infty$, the statement also holds for type $A_n$ $q$--TAZRP.

For $\tilde{\Pi}^n_{n-1}$ a similar argument holds. The equality to check is
\begin{multline*}
q \cdot q^{2(  \lambda_{n})} \Q{\lambda_{n-2}+\lambda_{n-1}} q^{2(\mu_1+\ldots+(\mu_{n-2}+\mu_{n-1}))} \Q{\mu_n} \\
= q\cdot q^{2(\lambda_{n-1} + \lambda_{n})} \Q{\lambda_{n-2}} q^{2(\mu_1+\ldots+\mu_{n-1})} \Q{\mu_n} + q\cdot q^{2\lambda_{n}} \Q{\lambda_{n-1}} q^{2(\mu_1+\ldots+\mu_{n-1})} \Q{\mu_n} .
\end{multline*} 
Again, this is true because \eqref{Projection} implies 
$$
\Q{\lambda_{n-2}+\lambda_{n-1}} =  q^{2\lambda_{n-1}} \Q{\lambda_{n-2}} + \Q{\lambda_{n-1}}.
$$
Therefore, $\tilde{\Pi}^n_{n-1}$ maps type $A_n$, spin $j$ ASEP and $q$--TAZRP to its type $A_{n-1}$ version.

Now turn to the argument for SSEP. Without loss of generality, assume that $\sigma$ is surjective. Any surjection $\sigma:\{1,\ldots,n\} \rightarrow \{1,\ldots,i+1\}$ can be defined as a composition of surjections $\{1,\ldots,n\} \rightarrow \{1,\ldots,n-1\} \rightarrow \ldots \rightarrow \{1,\ldots,i+1\}$. Therefore it suffices to consider when $i+1=n-1$. In this case, for any surjection $\sigma: \{1,\ldots,n\} \rightarrow \{1,\ldots,n-1\}$, there is a $\tau \in S_{n-1}$ such that
$$
\Pi^{\sigma} = \Pi^{\tau} \circ \Pi^{n}_{n-1}.
$$
Since $\Pi^{\tau} \circ \mathcal{L}_{n-1}^{x,x+1} = \mathcal{L}_{n-1}^{x,x+1}  \circ \Pi^{\tau}$, 
$$
\Pi^{\sigma} \circ \mathcal{L}_n^{x,x+1} = \Pi^{\tau} \circ \Pi^{n}_{n-1}\circ \mathcal{L}_n^{x,x+1} = \Pi^{\tau} \circ  \mathcal{L}_{n-1}^{x,x+1} \circ \Pi_{n-1}^n = \mathcal{L}_{n-1}^{x,x+1}  \circ \Pi^{\tau}\circ \Pi_{n-1}^n  = \mathcal{L}_{n-1}^{x,x+1}  \circ  \circ \Pi^{\sigma},
$$
as needed.
\end{proof}


\subsection{Relationships to other processes}
 
\subsubsection{$n=2$ and $j$ arbitrary}
From \cite{CGRS}, equation (11), the right jump rates of $\textrm{ASEP}(q,j)$ are
$$
q^{-4j + 1} \Q{\mu_1} \Q{\lambda_2}
$$
and the left jump rates are
$$
 q^{2\mu_1 -2\lambda_1 +3} \Q{\mu_2} \Q{\lambda_1} =  q^{2\mu_1 + 2\lambda_2  -4j  + 3} \Q{\mu_2} \Q{\lambda_1}
$$
So that after a rescaling of time by $q^{4j-2}$, the process here is precisely $\textrm{ASEP}(q,j)$ with closed boundary conditions. Additionally, at $q= 1$ the $\textrm{ASEP}(q,j)$ is the $2j$--SEP from \cite{GKRV}.

\subsubsection{$j=1/2$ and $n$ arbitrary} 
In this case, only one particle is allowed at each site, so if $\mu_j=1$ then $q^{2(\mu_1+\ldots+\mu_{j-1})}=1$. Since $\Q{1}=1$, the process reduces to $(n-1)$--species ASEP with closed boundary conditions. 

\subsubsection{$j\rightarrow\infty$ limit}

In the infinite spin limit, we have $\lambda_n\rightarrow\infty$. Recall that $\Q{\infty} = (1-q^2)^{-1}$. Since $i<n$, the expression for the left jumps must contain a $q^{\lambda_n}$ term which goes to zero. The expression for the right jumps only has a nonzero limit if $j=n$. Thus the only jumps are for a particle of class $i$ jumping to the right at rates:
$$
(q^{-1}-q)^{-1} q^{2(\mu_1+\ldots+\mu_{i-1})}\Q{\mu_i} .
$$
This then becomes a $(n-1)$--species $q$--TAZRP from \cite{T}.

\section{Stochastic Duality and Reversible Measures}\label{Results}

The next proposition implies Theorem \ref{DualityTheorem}(a).

\begin{proposition}The expression $D(\eta,\xi) = G^{-1}(\eta) S(\eta,\xi) G^{-1}(\xi) B^2(\xi)$ equals

$$
\mathrm{const}\cdot
\prod_{x=1}^L \q{\eta_1^x}^!  \prod_{i=1}^{n-1}   1_{\eta_{[1,i]}^x \geq \xi_{[1,i]}^x}   \frac{  \q{  \eta_{[1,i+1]}^x - \xi_{[1,i]}^x   }^!  }{\q{\eta_{[1,i]}^x   - \xi_{[1,i]}^x }^! }  q^{4jx\xi_i^x} q^{ \xi_i^x (\sum_{z=x+1}^L 2\eta_{[1,i]}^z + \eta_{[1,i]}^x)}
$$
where $\mathrm{const}$ is a conserved quantity under the dynamics.

\end{proposition}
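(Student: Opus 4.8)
The plan is to substitute the closed forms of $S$ and $G$ from Proposition~\ref{S} and of $B$ from \eqref{B} into $D=G^{-1}(\eta)\,S(\eta,\xi)\,G^{-1}(\xi)\,B^2(\xi)$ and simplify. Throughout one works modulo conserved quantities: as noted just before Proposition~\ref{S}, if $c(\eta,\xi)$ is conserved under both the $\eta$-- and the $\xi$--dynamics then $cD$ is again a duality function, and any polynomial in $L$, $j$, the $M_{[1,i]}(\eta)$ and the $M_{[1,i]}(\xi)$ is such a $c$. So it suffices to produce the claimed formula up to a factor of that shape, discarding such factors as they arise and, at the end, bundling them together with the $\mathrm{const}$'s of Propositions~\ref{H} and \ref{S} into the single $\mathrm{const}$ of the statement.

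First I would dispose of the $q$--factorials, which form the purely local part and need no conserved--quantity manipulation. At a site $x$, $G^{-1}(\eta)$ supplies $\prod_{i=1}^n\q{\eta_i^x}^!$ while $S$ supplies $\prod_{i=1}^{n-1}\binomq{\eta_{[1,i+1]}^x-\xi_{[1,i]}^x}{\eta_{i+1}^x}$; writing each binomial as $\q{\eta_{[1,i+1]}^x-\xi_{[1,i]}^x}^!\big/\big(\q{\eta_{[1,i]}^x-\xi_{[1,i]}^x}^!\,\q{\eta_{i+1}^x}^!\big)$ (using $\eta_{[1,i+1]}^x-\xi_{[1,i]}^x-\eta_{i+1}^x=\eta_{[1,i]}^x-\xi_{[1,i]}^x$), the denominators $\q{\eta_{i+1}^x}^!$ cancel $\q{\eta_2^x}^!\cdots\q{\eta_n^x}^!$ and one is left with exactly $\q{\eta_1^x}^!\prod_{i=1}^{n-1}\q{\eta_{[1,i+1]}^x-\xi_{[1,i]}^x}^!/\q{\eta_{[1,i]}^x-\xi_{[1,i]}^x}^!$ — the $q$--factorial part of the target — while the cutoffs $1_{\eta_{[1,i]}^x\ge\xi_{[1,i]}^x}$ are inherited verbatim from $S$. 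For the $\xi$--factorials I would use $\Q{m}^!=q^{m(m-1)/2}\q{m}^!$ (from $q^{m-1}\q{m}=\Q{m}$) to rewrite the local part $\prod_{i=1}^n\q{\xi_i^x}^!/\Q{\xi_i^x}^!$ of $G^{-1}(\xi)B^2(\xi)$ as the pure power $q^{-\frac12\sum_i(\xi_i^x)^2+\frac12\sum_i\xi_i^x}$, the factor $q^{\frac12\sum_i\xi_i^x}=q^{j}$ at each site being constant.

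It then remains to assemble the exponent of $q$. Collecting the two--site contributions of $S$, $G^{-1}(\eta)$ and $G^{-1}(\xi)$, with $(y,x,i)$--exponents $(\xi_i^y-\eta_{i+1}^y)(\eta_{[1,i]}^x-\xi_{[1,i]}^x)$, $\eta_{i+1}^y\eta_{[1,i]}^x$ and $\xi_{i+1}^y\xi_{[1,i]}^x$ respectively, the per--triple exponent collapses to $\xi_i^y\eta_{[1,i]}^x-\xi_i^y\xi_{[1,i]}^x+\eta_{i+1}^y\xi_{[1,i]}^x+\xi_{i+1}^y\xi_{[1,i]}^x$, and one is left with this summed over $1\le y<x\le L$, $1\le i\le n-1$, plus the local power $-\tfrac12\sum_{x,i}(\xi_i^x)^2$. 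The claim is that this agrees, up to a function of the conserved data, with the target exponent $\sum_{x}\sum_{i=1}^{n-1}\big(4jx\,\xi_i^x+2\xi_i^x\sum_{z>x}\eta_{[1,i]}^z+\xi_i^x\eta_{[1,i]}^x\big)$. To see this I would relabel the pure cross sum $\sum_{y<x}\xi_i^y\eta_{[1,i]}^x$ as $\sum_{x,i}\xi_i^x\sum_{z>x}\eta_{[1,i]}^z$ verbatim; telescope the remaining $\eta$--$\xi$ and $\xi$--$\xi$ two--site sums over the class index (as in the left--to--right rearrangement of \eqref{CQ3}), turning e.g.\ $\sum_{i=1}^{n-1}\eta_{i+1}^y\xi_{[1,i]}^x$ into $\sum_k\xi_k^x\eta_{[k+1,n]}^y=\sum_k\xi_k^x(2j-\eta_{[1,k]}^y)$; split $\sum_{y<x}=\sum_y-(\cdot)_{y=x}-\sum_{y>x}$ and complete the square against the local $-\tfrac12\sum_x(\xi_i^x)^2$ (the $\xi$--version of the identity in \eqref{CQ1}); and finally use $\sum_y 1=L$, $\sum_y y=\tfrac{L(L+1)}{2}$ and the boundary constraints $\eta^z_{[1,n]}=\xi^z_{[1,n]}=2j$ to extract the position--dependent piece $\sum_{x,i}4jx\,\xi_i^x$. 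Each term discarded along the way is of the form $\sum_i M_i(\xi)M_{[1,i]}(\eta)$, $M_k(\xi)^2$, $\big(M_{[1,i]}(\eta)-M_{[1,i]}(\xi)\big)^2$, $jLM_n(\xi)$, or a pure constant, hence conserved.

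The step I expect to be the main obstacle is exactly this last assembly. The naive expansion of $G^{-1}SG^{-1}B^2$ produces coefficient $1$, not $2$, in front of $\xi_i^x\sum_{z>x}\eta_{[1,i]}^z$ and carries no explicit lattice--position dependence, so one must deliberately feed in \eqref{CQ1} and its $\xi$--only analogue to double the mixed two--site sum at the cost of conserved squares, and repeatedly exploit $\eta^z_{[1,n]}=\xi^z_{[1,n]}=2j$ to manufacture the $4jx\,\xi_i^x$ factor, all while verifying that no non--conserved remainder survives. Everything else is routine manipulation of $q$--integers.
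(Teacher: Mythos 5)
Your proposal is correct and follows essentially the same route as the paper's proof: substitute the explicit formulas for $S$, $G$, and $B$, telescope the $q$--binomials against $\prod_i \q{\eta_i^x}^!$, convert $\q{\xi_i^x}^!/\Q{\xi_i^x}^!$ into powers of $q$, and then rework the two--site exponents using the class--index telescoping, the constraints $\eta^x_{[1,n]}=\xi^x_{[1,n]}=2j$, and the conserved quantities \eqref{CQ1}--\eqref{CQ3} to produce the doubled cross term and the $4jx\xi_i^x$ factor. The mechanisms you flag as the main obstacle (doubling the mixed sum and manufacturing the position dependence at the cost of conserved squares and products of the $M_i$'s) are exactly the steps the paper carries out.
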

\begin{proof}
By Proposition \ref{S} and \eqref{B}, $D(\eta,\xi)$ equals (up to a constant)
$$
\prod_{x=1}^L \prod_{i=1}^n \frac{\q{\eta_i^x}^!\q{\xi_i^x}^!}{\Q{\xi_i^x}^!} \prod_{i=1}^{n-1} 1_{\eta_{[1,i]}^x \geq \xi_{[1,i]}^x} \binomq{ \eta_{[1,i+1]}^x - \xi_{[1,i]}^x   }{  \eta_{i+1}^x   } \prod_{1 \leq y < x \leq L} \prod_{i=1}^{n-1}  q^{  \xi_i^y\eta_{[1,i]}^x + \eta_{i+1}^y\xi_{[1,i]}^x  } q^{(\xi_{i+1}^y - \xi_i^y)\xi_{[1,i]}^x}.
$$
Now using that
$$
\prod_{x=1}^L \prod_{i=1}^n  \frac{\q{\xi_i^x}^!}{\Q{\xi_i^x}^!} = \prod_{x=1}^L \prod_{i=1}^n q^{-\xi_i^x(\xi_i^x-1)/2} = \prod_{i=1}^n q^{M_i(\xi)/2} \prod_{x=1}^L q^{- (\xi_i^x)^2/2}
$$
and
$$
\prod_{i=1}^n \q{\eta_i^x}^! \prod_{i=1}^{n-1} \frac{  \q{  \eta_{[1,i+1]}^x - \xi_{[1,i]}^x   }^!  }{ \q{\eta_{i+1}^x}^! \q{\eta_{[1,i]}^x   - \xi_{[1,i]}^x }^! }  =   \q{\eta_1^x}^! \prod_{i=1}^{n-1} \frac{  \q{  \eta_{[1,i+1]}^x - \xi_{[1,i]}^x   }^!  }{\q{\eta_{[1,i]}^x   - \xi_{[1,i]}^x }^! } 
$$
simplifies the expression for $D(\eta,\xi)$ (up to a constant) to 
$$
\prod_{x=1}^L  q^{-(\xi_1^x)^2/2}\prod_{i=1}^{n-1}   1_{\eta_{[1,i]}^x \geq \xi_{[1,i]}^x}   \frac{  \q{  \eta_{[1,i+1]}^x - \xi_{[1,i]}^x   }^!  }{\q{\eta_{[1,i]}^x   - \xi_{[1,i]}^x }^! }  q^{-(\xi^x_{i+1})^2/2}\prod_{1 \leq y < x \leq L} \prod_{i=1}^{n-1}  q^{  \xi_i^y\eta_{[1,i]}^x + \eta_{i+1}^y\xi_{[1,i]}^x  } q^{(\xi_{i+1}^y - \xi_i^y)\xi_{[1,i]}^x}.
$$
Now rewrite
\begin{align*}
\prod_{1\leq y<x\leq L} \prod_{i=1}^{n-1} q^{ (\xi_{i+1}^y - \xi_i^y)\xi_{[1,i]}^x } &= \prod_{1\leq y<x\leq L} \prod_{i=1}^{n-1} q^{ \left(\xi_{[i+1,n]}^y - \xi_{[i,n-1]}^y\right)\xi_i^x } \\
&= \prod_{i=1}^{n-1} \prod_{x=1}^L\prod_{y=1}^{x-1} q^{ (\xi_{n}^y - \xi_{i}^y)\xi_i^x } \\
&= \prod_{i=1}^{n-1}q^{M_i(\xi)M_n(\xi)}\prod_{x=1}^L q^{-\xi_i^x\xi_n^x}\prod_{z=x+1}^L q^{ -\xi_{i}^x(\xi_i^z + \xi_n^z )} .
\end{align*}
Using \eqref{CQ2}, the duality function becomes
$$
\prod_{x=1}^L  \q{\eta_1^x}^!  \prod_{i=1}^{n-1}   1_{\eta_{[1,i]}^x \geq \xi_{[1,i]}^x}   \frac{  \q{  \eta_{[1,i+1]}^x - \xi_{[1,i]}^x   }^!  }{\q{\eta_{[1,i]}^x   - \xi_{[1,i]}^x }^! }  \prod_{1 \leq i< j \leq n-1} q^{\xi_i^x \xi_j^x}\prod_{1 \leq y < x \leq L} \prod_{i=1}^{n-1}  q^{  \xi_i^y\eta_{[1,i]}^x + \eta_{i+1}^y\xi_{[1,i]}^x  } \prod_{z=x+1}^L q^{-\xi_i^x(\xi_i^z + \xi_n^z)}
$$
By plugging in  $\xi_i^z + \xi_n^z = 2j - \sum_{j\neq i} \xi_j^z$, and using \eqref{CQ3} the duality function becomes 
$$
\prod_{x=1}^L \q{\eta_1^x}^!  \prod_{i=1}^{n-1}   1_{\eta_{[1,i]}^x \geq \xi_{[1,i]}^x}   \frac{  \q{  \eta_{[1,i+1]}^x - \xi_{[1,i]}^x   }^!  }{\q{\eta_{[1,i]}^x   - \xi_{[1,i]}^x }^! }  q^{2jx\xi_i^x} \prod_{1 \leq y < x \leq L} \prod_{i=1}^{n-1}  q^{  \xi_i^y\eta_{[1,i]}^x + \eta_{i+1}^y\xi_{[1,i]}^x  }  .
$$
And the last product simplifies as
\begin{align*}
\prod_{1 \leq y < x \leq L} \prod_{i=1}^{n-1}  q^{  \xi_i^y\eta_{[1,i]}^x + \eta_{i+1}^y\xi_{[1,i]}^x  }  &=  \prod_{i=1}^{n-1}\prod_{1 \leq y < x \leq L}  q^{ \eta_{[i+1,n]}^y\xi_{i}^x  }  \prod_{1 \leq x < z \leq L}  q^{ \xi_i^x\eta_{[1,i]}^z } \\
&= \prod_{i=1}^{n-1} \prod_{x=1}^Lq^{M_{[i+1,n]}(\eta)M_i(\xi)} q^{-\xi_i^x\eta_{[i+1,n]}^x} \prod_{1 \leq x < z \leq L} q^{\xi_i^x ( \eta_{[1,i]}^z - \eta^z_{[i+1,n]} ) }\\
& = \mathrm{const} \cdot \prod_{i=1}^{n-1} \prod_{x=1}^L q^{ \sum_{z=x+1}^L 2\eta_{[1,i]}^z \xi_i^x + \eta_{[1,i]}^x\xi_i^x + 2jx\xi_i^x}.
\end{align*}
to get the result.

\end{proof}

\subsection{$j=1/2$}
Note that when $j=1/2$, the binomial terms $\binomq{1}{0}=\binomq{1}{1}=\binomq{0}{0}$ are always $1$. Therefore the duality function simplifies to
$$
D(\eta,\xi) = \prod_{i=1}^{n-1} \prod_{x=1}^L \prod_{z=x+1}^L q^{2\xi^x_i\eta^z_{[1,i]} +2\xi_i^xx}.
$$
In particular, when $n=3$ this is the duality from \cite{BS} and \cite{K}, and when $n=2$ this is the duality function from \cite{S}.

\subsection{$n=2$}
In this case, the duality function is
$$
D(\eta,\xi)=C\prod_{x=1}^L \frac{ \binomq{\eta^x}{\xi^x} }{ \binomq{2j}{\xi^x} }q^{-\xi^x(2j-\eta^x)  } \prod_{z=x+1}^L q^{2\xi^x(\eta^z - 2j)  } = C' \prod_{x=1}^L q^{\xi^x(2\sum_{z=x+1}^L\eta^z + \eta^x + 4jx)}.
$$
Compare this to the duality function from \cite{CGRS}
\begin{equation}\label{CGRSDuality}
\begin{aligned}
D(\eta,\xi) &= \prod_{x=1}^L  \frac{ \binomq{\eta^x}{\xi^x} }{ \binomq{2j}{\xi^x} } \cdot q^{(\eta^x-\xi^x)\left[ 2\cdot 1_{x\geq 2} \sum_{y=1}^{x-1} \xi^y + \xi^x \right] + 4jx\xi^x} 1_{\eta^x \geq \xi^x}, \\
D'(\eta,\xi) &= \prod_{x=1}^L  \frac{ \binomq{\eta^x}{\xi^x} }{ \binomq{2j}{\xi^x} } \cdot q^{(\eta^x-\xi^x)\left[ 2\cdot 1_{x\geq 2} \sum_{y=1}^{x-1} \eta^y + \eta^x \right] + 4jx\xi^x} 1_{\eta^x \geq \xi^x}.
\end{aligned}
\end{equation}
Since
$$
\sum_{x=1}^L \eta^x\left( 2\sum_{y=1}^{x-1} \eta^y + \eta^x\right) = \left( \sum_{x=1}^L \eta^x\right)^2
$$
the second function is
$$
D'(\eta,\xi) = q^{M_1(\eta)^2-2M_1(\eta)M_1(\xi)}\prod_{x=1}^L  \frac{ \binomq{\eta^x}{\xi^x} }{ \binomq{2j}{\xi^x} } \cdot q^{\xi^x\left[ 2 \sum_{z=x+1}^{L} \eta^z + \eta^x \right] + 4jx\xi^x} 1_{\eta^x \geq \xi^x}
$$
so the dualities match up to a constant. Note that this same argument implies
$$
D(\eta,\xi) = q^{-M_1(\xi)^2} \prod_{x=1}^L  \frac{ \binomq{\eta^x}{\xi^x} }{ \binomq{2j}{\xi^x} } \cdot q^{\eta^x\left[ 2\cdot 1_{x\geq 2} \sum_{y=1}^{x-1} \xi^y + \xi^x \right] + 4jx\xi^x} 1_{\eta^x \geq \xi^x} ,
$$
so in fact, $D$ and $D'$ are the same to up a constant.

Additionally, when $q=1$ the duality function $D(\eta,\xi)$ reduces to that of \cite{GKRV}.
 
\subsection{CP Symmetry}
Let $T$ be the function on the state space which reverses the class system of the particles. That is, 
$$
(T(\xi))^x_i = \xi^x_{n+1-i} \text{ for } 1 \leq x \leq L, \ 1 \leq i \leq n.
$$
Take $V(\eta,\xi) = 1_{\{\eta = T(\xi)\}}$ in Section \ref{Invo}. The next proposition implies Theorem \ref{DualityTheorem}(b).

\begin{proposition}\label{CPS}
(a) The generator of space--reversed $\mathrm{ASEP}(q,j)$ is $V^{-1}\mathcal{L}V$. 

(b)
The expression $D(T(\eta),\xi)$ equals
$$
\mathrm{const}\cdot\prod_{x=1}^L \q{2j-\eta_{[1,n-1]}^x}^!  \prod_{i=1}^{n-1}   1_{2j-\eta_{[1,n-i]}^x \geq \xi_{[1,i]}^x}   \frac{  \q{  2j - \eta_{[1,n-i-1]}^x - \xi_{[1,i]}^x   }^!  }{\q{2j - \eta_{[1,n-i]}^x   - \xi_{[1,i]}^x }^! }   q^{ -\xi_i^x (\sum_{z=x+1}^L 2\eta_{[1,n-i]}^z + \eta_{[1,n-i]}^x)} .
$$
In the $j\rightarrow\infty$ limit, $D(T(\eta),\xi)$ converges to 
$$
\mathrm{const}\cdot\prod_{x=1}^L \prod_{i=1}^{n-1} q^{ \xi_i^x (\sum_{y=1}^x 2\eta_{[1,n-i]}^z) } .
$$

\end{proposition}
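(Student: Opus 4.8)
\emph{Proof sketch.}
Part (a) I would prove by direct substitution in the explicit generator of Proposition~\ref{Gen}. Since $T$ is an involution its matrix satisfies $V=V^{-1}$, so $(V^{-1}\mathcal{L}V)(\eta,\eta')=\mathcal{L}(T(\eta),T(\eta'))$, and the generator of space--reversed $\mathrm{ASEP}(q,j)$ is, by definition, the one built from the local operators of Proposition~\ref{Gen} with the roles of the left and right site of each bond interchanged. The only facts needed about $T$ are $T(\mu_{i\to j})=(T(\mu))_{(n+1-j)\to(n+1-i)}$ (note $n+1-j<n+1-i$) and $(T(\mu))_1+\cdots+(T(\mu))_m=\mu_{[n+1-m,\,n]}$; substituting these into Proposition~\ref{Gen} sends, e.g., the right--jump entry $q^{-1}q^{2(\mu_1+\cdots+\mu_{i-1})}\Q{\mu_i}q^{2(\lambda_{j+1}+\cdots+\lambda_n)}\Q{\lambda_j}$ to $q\,q^{2(\mu_{i+1}+\cdots+\mu_n)}\Q{\mu_i}q^{2(\lambda_1+\cdots+\lambda_{j-1})}\Q{\lambda_j}$, exactly the corresponding entry of the site--interchanged generator, and symmetrically for the other line. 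This is precisely the CP--symmetry, also visible from the automorphism $\omega$.

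The exact formula in part (b) is a change of variables in the formula for $D(\eta,\xi)$ from the previous proposition, using $(T(\eta))_{[1,i]}^x=2j-\eta_{[1,n-i]}^x$ (so $(T(\eta))_1^x=\eta_n^x=2j-\eta_{[1,n-1]}^x$). After substitution the indicator, the ratio of $q$--factorials, and the prefactor $\q{\eta_1^x}^!$ become exactly those in the statement; it remains only to observe that the factor $q^{4jx\xi_i^x}$ together with the piece $q^{\xi_i^x(4jL+2j)}$ obtained from substituting the constant $2j$ parts of $\sum_{z>x}2(T(\eta))_{[1,i]}^z+(T(\eta))_{[1,i]}^x$ have exponents summing, over $x$ and $i$, to a quantity depending only on $j,L$ (since $\sum_i\xi_i^x=2j$), hence are absorbed into $\mathrm{const}$, leaving the surviving factor $q^{-\xi_i^x(\sum_{z>x}2\eta_{[1,n-i]}^z+\eta_{[1,n-i]}^x)}$.

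For the $j\to\infty$ limit I restrict to configurations with finitely many particles per site, so $\eta_n^x=2j-\eta_{[1,n-1]}^x\to\infty$. The plan is to rewrite each $q$--factorial via $\q{m}^!=q^{-m(m-1)/2}\Q{m}^!$ (from $q^{n-1}\q{n}=\Q{n}$) and split the exact formula into a product of $\{\cdot\}_{q^2}$--factorials and a power of $q$. For the $\{\cdot\}_{q^2}$--part: at each site the numerator carries $\Q{\eta_n^x}^!$ and $\prod_{i=1}^{n-1}\Q{2j-\eta_{[1,n-i-1]}^x-\xi_{[1,i]}^x}^!$, the denominator $\prod_{i=1}^{n-1}\Q{2j-\eta_{[1,n-i]}^x-\xi_{[1,i]}^x}^!$; writing $\Q{m}^!=(1-q^2)^{-m}\prod_{k=1}^{m}(1-q^{2k})$, using $\prod_{k=1}^{m}(1-q^{2k})\to\prod_{k=1}^{\infty}(1-q^{2k})$ and $\eta_n^x+\sum_{i=1}^{n-1}\eta_{n-i}^x=2j$, this product converges to the $(\eta,\xi)$--independent constant $\big(\prod_{k=1}^{\infty}(1-q^{2k})\big)^{L}(1-q^2)^{-2jL}$. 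For the $q$--power part, writing $\sum_{z>x}2\eta_{[1,n-i]}^z=2M_{[1,n-i]}(\eta)-\sum_{y=1}^{x}2\eta_{[1,n-i]}^y$, I would verify by a short bookkeeping that the total exponent is
$$
L\,(j-2j^2)\;-\;2\sum_{i=1}^{n-1}M_{[1,n-i]}(\eta)M_i(\xi)\;+\;\sum_{x=1}^{L}\sum_{i=1}^{n-1}\xi_i^x\Big(\sum_{y=1}^{x}2\eta_{[1,n-i]}^y\Big);
$$
here $L(j-2j^2)$ comes from the purely--$\eta$ local exponents collapsing to $j-2j^2$ per site (using $\sum_{k=1}^{n-1}\eta_k^x(\eta_{[1,k-1]}^x+\eta_{[1,k]}^x)=(\eta_{[1,n-1]}^x)^2$ and $\eta_{[1,n-1]}^x=2j-\eta_n^x$), while the bilinear--in--$\eta,\xi$ local terms cancel because $\sum_i\eta_{n-i}^x\xi_{[1,i]}^x=\sum_i\xi_i^x\eta_{[1,n-i]}^x$ (both equal $\sum_{k,l\ge1,\,k+l\le n}\eta_k^x\xi_l^x$). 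The first two summands of the display are $(\eta,\xi)$--independent or conserved, so they pass into $\mathrm{const}$, and the limit is the claimed duality function.

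The step I expect to be the main obstacle is the $q$--power bookkeeping in the last paragraph: nearly every factor of $D(T(\eta),\xi)$ individually diverges or vanishes as $j\to\infty$, so one must group the exponents carefully enough that all $j$--dependent divergence collapses into a single $(\eta,\xi)$--independent constant (together with the conserved factor $q^{-2\sum_i M_{[1,n-i]}(\eta)M_i(\xi)}$). The two exact cancellations — the bilinear identity and the collapse of the purely--$\eta$ exponents to $j-2j^2$ per site — are the crux; the remainder is routine algebra and elementary asymptotics of $q$--integers.
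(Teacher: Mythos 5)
Your proposal is correct and follows essentially the same route as the paper: part (a) by substituting $T$ into the explicit local rates of Proposition~\ref{Gen} and identifying the result with the site--interchanged generator, and part (b) by the change of variables $(T(\eta))_{[1,i]}^x = 2j-\eta_{[1,n-i]}^x$ followed by $q$--factorial asymptotics, hinging on the same two cancellations the paper uses (the bilinear identity $\sum_i\xi_{[1,i]}^x\eta_{n-i}^x=\sum_i\xi_i^x\eta_{[1,n-i]}^x$ and the collapse of the purely--$\eta$ quadratic exponents). One trivial slip: $\sum_{i=1}^{n-1}\xi_i^x$ equals $2j-\xi_n^x$, not $2j$, but the offending factor still has total exponent $(4jL+2j)\sum_{i=1}^{n-1}M_i(\xi)$, a conserved quantity, so it is absorbed into $\mathrm{const}$ exactly as you claim.
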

\begin{proof} 
(a) 
Note that
$$
V^{-1}\mathcal{L}V = \sum_{x=1}^L \tilde{\mathcal{L}}^{x,x+1}
$$
where for any $i<j$,
\begin{align*}
\tilde{\mathcal{L}}^{x,x+1}\Big( (\mu,\lambda) \rightarrow (\mu_{j\rightarrow i},\lambda_{i\rightarrow j}) \Big) &= \mathcal{L}^{x,x+1}\Big((T(\mu),T(\lambda)) \rightarrow (T(\mu_{j\rightarrow i}),T(\lambda_{i\rightarrow j}))\Big)  \\
&= \mathcal{L}^{x,x+1}\Big( (T(\mu),T(\lambda)) \rightarrow T(\mu)_{n+1-j \rightarrow n+1-i}, T(\lambda)_{n+1-i \rightarrow n+1-j}   \Big)  \\
&= q^{-1} \cdot q^{2(T(\mu)_1 + \ldots + T(\mu)_{n-j})} \Q{T(\mu)_{n+1-j}} \cdot q^{2(T(\lambda)_{n+2-i} + \ldots + T(\lambda)_n )} \Q{T(\lambda)_{n+1-i}}\\
&= q^{-1} \cdot q^{2(\mu_n + \ldots + \mu_{j+1})} \Q{\mu_j} q^{2(\lambda_1 + \ldots + \lambda_{i-1})} \Q{\lambda_i} \\
&= \mathcal{L}^{x,x+1}( (\lambda,\mu) \rightarrow (\lambda_{i\rightarrow j},\mu_{j\rightarrow i}) ),
\end{align*}
and similarly
$$
\tilde{\mathcal{L}}_{x,x+1}\left( (\mu,\lambda) \rightarrow (\mu_{i\rightarrow j},\lambda_{j\rightarrow i}) \right) = \mathcal{L}_{x,x+1}( (\lambda,\mu) \rightarrow (\lambda_{j\rightarrow i},\mu_{i\rightarrow j}) )   .
$$
Therefore, $\tilde{\mathcal{L}}= V^{-1}\mathcal{L}V$ is the generator of space--reversed $\mathrm{ASEP}(q,j)$.

(b)
By the definition of $T$, $D(T(\eta),\xi)$ equals
$$
 \prod_{x=1}^L \q{\eta_n^x}^!  \prod_{i=1}^{n-1}   1_{\eta_{[n-i+1,n]}^x \geq \xi_{[1,i]}^x}   \frac{  \q{  \eta_{[n-i,n]}^x - \xi_{[1,i]}^x   }^!  }{\q{\eta_{[n-i+1,n]}^x   - \xi_{[1,i]}^x }^! }   q^{ \xi_i^x (\sum_{z=x+1}^L 2\eta_{[n-i+1,n]}^z + \eta_{[n-i+n]}^x)} q^{4jx\xi_i^x}.
$$
Using that $ \eta^x_1 + \ldots + \eta^x_n = 2j$  gives the expression in the Proposition.

As $j\rightarrow\infty$, the indicator function always holds. For the $q$--factorial terms, use that for a fixed finite $k>0$,
$$
\lim_{j\rightarrow \infty} (q^{-1}-q)^{2j-k} q^{(2j-k)(2j-k+1)/2}\q{2j-k}^! = \lim_{j\rightarrow\infty}\prod_{l=1}^{2j-k} q^{l}(q^{-l} - q^{l}) = c(q)
$$
where $c(q)= \prod_{l=1}^{\infty} (1-q^{2l})$. 
Therefore, the limit of the $q$--factorial terms is (up to the constant $c(q)$)
\begin{align*}
\prod_{x=1}^L q^{-(\eta^x_{[1,n-1]})^2/2} \prod_{i=1}^{n-1} \frac{q^{-( \eta_{[1,n-i-1]}^x + \xi^x_{[1,i]} )^2/2}}{q^{-(\eta_{[1,n-i]}^x + \xi^x_{[1,i]})^2/2}} &= \prod_{x=1}^L q^{-(\eta^x_{[1,n-1]})^2/2} \prod_{i=1}^{n-1} \frac{q^{-( \eta_{[1,n-i-1]}^x)^2/2 - \eta_{[1,n-i-1]}^x \xi^x_{[1,i]}   }}{q^{-( \eta_{[1,n-i]}^x)^2/2 - \eta_{[1,n-i]}^x \xi^x_{[1,i]} }}\\
&= \prod_{x=1}^L q^{-(\eta_{[1,n-1]}^x)^2/2} \prod_{i=1}^{n-1} q^{( \eta^x_{[1,n-i-1]} + \xi^x_{[1,i]})\eta_{n-i}^x + (\eta_{n-i}^x)^2/2}.
\end{align*}
And note that
$$
-\frac{(\eta^x_{[1,n-1]})^2}{2} = -\frac{1}{2}\left( \sum_{i=1}^{n-1} \eta_i^x \right)^2 = -\frac{1}{2}\sum_{i=1}^{n-1}  ( (\eta^x_i)^2 + 2\eta^x_{[1,i-1]}\eta^x_i   ).
$$
Since $\sum_{i=1}^{n-1} \xi^x_{[1,i]}\eta^x_{n-i} = \sum_{i=1}^{n-1}\xi^x_i \eta^x_{[1,n-i]}$ the duality function simplifies to 
$$
\prod_{x=1}^L \prod_{i=1}^{n-1} q^{-\xi_i^x\left( \sum_{z=x+1}^L 2\eta_{[1,n-i]}^z \right)} = \mathrm{const} \cdot \prod_{x=1}^L \prod_{i=1}^{n-1} q^{\xi_i^x\left( \sum_{y=1}^x 2\eta_{[1,n-i]}^z \right)},
$$
finishing the proof.

\end{proof}

Here is an example demonstrating this. 
Suppose $L=2,n=3$ and $\eta_1^1=1,\eta_2^1=0,\eta_1^2=2,\eta_2^2=3$ and $\xi^1_1=1,\xi^1_2=1,\xi^2_1=1,\xi^2_2=0$. Visually, (note that the holes represented by $3$'s are omitted, because there are infinitely many of them)
$$
\eta = 
\left( 
\begin{array}{cc}
 & 2 \\
 & 2 \\
 & 2 \\
 & 1 \\
1 & 1 
\end{array}
\right)
\quad \quad 
\xi=
\left(
\begin{array}{cc}
2 & \\
1 & 1
\end{array}
\right)
$$
Then we can calculate
\begin{align*}
\tilde{\mathcal{L}}D(\eta,\xi) &= -( \Q{2} + q^4\Q{3})q^{16} + \Q{2}q^{20} + q^4\Q{3}q^{18}\\
&= q^{-16}(-1-q^2) + q^{26} + q^2\cdot q^{20}\\
&= D\mathcal{L}^*(\eta,\xi)
\end{align*}

\subsection{Reversible Measures}
\begin{proposition} When restricted to states with a fixed number of particles, there is a unique reversible measure given by
$$
 \mathbb{P}(\xi) \propto \prod_{x=1}^L  \prod_{i=1}^{n}  \frac{1}{\q{\xi_i^x}^!} q^{ ( \xi_i^x)^2/2 } \prod_{1 \leq y < x \leq L} \prod_{i=1}^{n-1}  q^{ -   2 \xi_{i+1}^y \xi_{[1,i]}^x  } .
$$
\end{proposition}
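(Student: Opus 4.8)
The plan is to obtain the measure directly from the general recipe of Sections~\ref{GST}--\ref{Invo}, which guarantees that $G^2B^{-2}$ is a reversible measure of $\mathcal{L}=G^{-1}HG$ once $B^{-1}HB$ is self--adjoint; here $G$ is the diagonal matrix of ground--state coefficients from Proposition~\ref{S}, and $B$ is the diagonal matrix of \eqref{B}, whose conjugate $B^{-1}HB$ was shown to be self--adjoint after Proposition~\ref{H}. Thus the first step is a direct computation. Squaring Proposition~\ref{S} gives
$$
G(\xi)^2 = \mathrm{const}\cdot\prod_{x=1}^L\prod_{i=1}^n\frac{1}{\q{\xi_i^x}^{!\,2}}\prod_{1\le y<x\le L}\prod_{i=1}^{n-1}q^{-2\xi_{i+1}^y\xi_{[1,i]}^x},
$$
while from \eqref{B} one has $B(\xi,\xi)^{-2}=\prod_{x=1}^L\prod_{i=1}^n\Q{\xi_i^x}^!$. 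Multiplying, the only per--site, per--species factor to simplify is $\Q{m}^!/\q{m}^{!\,2}$; using $q^{l-1}\q{l}=\Q{l}$ (stated before \eqref{Projection}) gives $\Q{m}^!/\q{m}^!=\prod_{l=1}^m q^{l-1}=q^{m(m-1)/2}$, hence $\Q{m}^!/\q{m}^{!\,2}=q^{m(m-1)/2}/\q{m}^!$.

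The second step is to absorb the discrepancy between $q^{m(m-1)/2}$ and the exponent $q^{m^2/2}$ appearing in the statement. Since $q^{m(m-1)/2}=q^{-m/2}q^{m^2/2}$ and $\sum_{i=1}^n\xi_i^x=2j$ for every $x$, the collected factor $\prod_{x=1}^L\prod_{i=1}^n q^{-\xi_i^x/2}=q^{-jL}$ is independent of $\xi$ and may be absorbed into the normalization, yielding exactly the stated $\mathbb{P}(\xi)$. This expression is finite and strictly positive on all of $\mathcal{S}_{\vec m}(n,j,I)$ for finite $I$, a fact used below.

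For uniqueness, observe that the $M_i(\xi)$ are conserved, so $\mathcal{L}$ splits as a direct sum over the finite sets $\mathcal{S}_{\vec m}(n,j,I)$; it suffices to show each restricted chain is irreducible, since a reversible (hence stationary) measure of a finite irreducible Markov chain is unique up to a positive scalar, and any reversible measure of $\mathcal{L}$ is then a convex combination over $\vec m$. Irreducibility follows from Proposition~\ref{Gen}: the rates $\mathcal{L}^{x,x+1}\big((\mu,\lambda),(\mu_{j\rightarrow i},\lambda_{i\rightarrow j})\big)$ and $\mathcal{L}^{x,x+1}\big((\mu,\lambda),(\mu_{i\rightarrow j},\lambda_{j\rightarrow i})\big)$ are strictly positive precisely when the relevant occupation numbers are nonzero (as $\Q{k}>0$ for $k\ge 1$ and $\Q{0}=0$). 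Consequently, for any bond $(x,x+1)$ and any two distinct classes $a,b$, whenever site $x$ contains a class--$a$ particle and site $x+1$ a class--$b$ particle, they can be exchanged with positive rate; equivalently, a class--$a$ particle can be pushed from $x$ to $x+1$ as long as $\eta_a^{x+1}<2j$, and likewise to the left. A standard transitivity argument shows these moves connect any two configurations of $\mathcal{S}_{\vec m}$, so the restricted chain is irreducible.

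I expect the only real content to lie in the irreducibility step; the first two paragraphs are routine bookkeeping with $q$--factorials. For infinite $I$ the statement is vacuous in the sense of the Remarks accompanying the reversible--measure proposition in Section~\ref{Desc}, since the normalizing constant is then generically zero.
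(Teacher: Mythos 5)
Your computation of $G^2B^{-2}$ is exactly the paper's proof: square the expression for $G$ from Proposition \ref{S}, multiply by $\prod_{x}\Q{\xi_1^x}^!\cdots\Q{\xi_n^x}^!$ from \eqref{B}, and use $\Q{m}^!=q^{m(m-1)/2}\q{m}^!$ together with $\sum_i\xi_i^x=2j$ to absorb the leftover $q^{-m/2}$ factors into the constant. The paper stops there and does not argue uniqueness at all; your added irreducibility argument on each $\mathcal{S}_{\vec m}$ (positivity of the swap rates from Proposition \ref{Gen} whenever the relevant occupation numbers are nonzero, plus transitivity) is correct and actually fills in a step the paper leaves implicit.
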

\begin{proof} Using \eqref{B} this results in 
$$
G^2(\xi)B^{-2}(\xi) = G(\xi)^2 \prod_{x=1}^L \Q{\xi_1^x}^! \cdots \Q{\xi_n^x}^!
$$
and plugging in $G(\xi)$ from Proposition \ref{S} yields
$$
C \prod_{x=1}^L  \prod_{i=1}^{n}  \frac{\Q{\xi_i^x}^!}{(\q{\xi_i^x}^!)^2}  \prod_{1 \leq y < x \leq L} \prod_{i=1}^{n-1}  q^{ -   2 \xi_{i+1}^y \xi_{[1,i]}^x  } = C' \prod_{x=1}^L  \prod_{i=1}^{n}  \frac{1}{\q{\xi_i^x}^!} q^{ ( \xi_i^x)^2/2 } \prod_{1 \leq y < x \leq L} \prod_{i=1}^{n-1}  q^{ -   2 \xi_{i+1}^y \xi_{[1,i]}^x }.
$$
\end{proof}

\subsection{$j=1/2$ }
For $n$--species ASEP, all $\xi_i^x$ are $0$ or $1$, so this results in
$$
\mathbb{P}(\xi) \propto  \prod_{1 \leq y < x \leq L} \prod_{i=1}^{n-1}  q^{ -   2\xi_{[1,i]}^x\xi_{i+1}^y } .
$$
For $n=2$ it says that for the standard ASEP, this results in 
$$
\mathbb{P}(\xi) \propto \prod_{1 \leq x \leq L} q^{-2(x-1)\cdot \xi^x}
$$
For $n=3$ it reduces to
\begin{align*}
\prod_{1 \leq y < x \leq L}   q^{ -   2 \xi_{2}^y \xi_1^x - 2\xi_3^y(\xi^x_1+\xi^x_2) }  &= C' \prod_{1 \leq y < x \leq L} q^{-2\xi_1^x(1-\xi_1^y) } q^{-2\xi_3^y \xi^x_2}  \\
&=  C'\prod_{x=1}^{L}  q^{ -2x\xi_1^x } \prod_{1 \leq y < x \leq L}  q^{-2\xi_3^y \xi^x_2}.
\end{align*}

Compare this to Theorem 3.1 of \cite{BS}, which says that the reversible measures of two--species ASEP with asymmetry $q^{-1}$ should be 
\begin{align*}
\prod_{x=1}^{L}  q^{ 2x(\xi_1^x-\xi_3^x) } \prod_{1 \leq y < x \leq L} q^{ \xi^y_1 \xi^x_3 - \xi^y_3 \xi^x_1  } & = C \prod_{x=1}^{L}  q^{ 2x(\xi_1^x-\xi_3^x) } \prod_{1 \leq y < x \leq L} q^{- 2\xi^y_3 \xi^x_1  } \\
&= C'\prod_{x=1}^{L}  q^{ 2x\xi_1^x } \prod_{1 \leq y < x \leq L} q^{\xi^y_3(2- 2 \xi^x_1)  } ,
\end{align*}
which matches up to constants and switching $q\mapsto q^{-1}$.

\subsection{$n=2$}
When $n=2$ the formula is
\begin{align*}
C' \prod_{x=1}^L \frac{1}{  \q{\xi_1^x}^! \q{ \xi_2^x}^!   } q^{ ( \xi_1^x)^2/2  + (\xi_2^x)^2/2 } \prod_{1 \leq y < x \leq L}  q^{ -   2 \xi_{2}^y \xi_{1}^x  }  &= C''\prod_{x=1}^L \frac{1}{  \q{\xi_1^x}^! \q{ \xi_2^x}^!   } q^{ ( \xi_1^x)^2  } \prod_{1 \leq y < x \leq L}  q^{ -   2 \xi_{2}^y \xi_{1}^x  } \\
&=C'''\prod_{x=1}^L \frac{\q{2j}^!}{  \q{\xi_1^x}^! \q{ 2j-\xi_1^x}^!   } q^{ ( \xi_1^x)^2  } \prod_{1 \leq y < x \leq L}  q^{ -   4j \xi_{1}^x + 2\xi_1^y \xi_1^x  }\\
&= C''''\prod_{x=1}^L q^{-4jx \xi_1^x}\binomq{2j}{\xi_1^x},
\end{align*}
which matches the family of reversible measures for ASEP$(q,j)$, which are given by product measures with marginals
$$
\mathbb{P}^{(\alpha)}(\xi^x_1=n) \propto \alpha^n \binomq{2j}{n}q^{2n(1+j-2jx)}.
$$

\subsection{Stationary Measures and Duality}
In the equality defining duality,
$$
\mathbb{E}_x\left[ D(X(t),y)\right] = \mathbb{E}_y \left[ D(x,Y(t))\right],
$$
take $t \rightarrow \infty$ in the right--hand--side. If there is a unique stationary measure $Y(\infty)$, then the right--hand--side converges to $\mathbb{E}\left[ D(x,Y(\infty))\right],$ whose value does not depend on the initial state $y$. The same limit on the left--hand--side will yield $\mathbb{E}\left[ D(X(\infty),y)\right]$ whose value does not depend on the initial state $x$. Since these two sides are equal, 
\begin{align*}
&\mathbb{E}\left[ D(x,Y(\infty))\right] \text{ does not depend on } x\in X, \\
& \mathbb{E}\left[D(X(\infty),y)\right] \text{ does not depend on } y\in Y.
\end{align*}

Here are a few examples demonstrating this.

\underline{$n=2,j=1,L=2$}

The stationary measure for $\mathrm{ASEP}(q,1)$ with one particle are given by
$$
\mathbb{P}\left( 
\begin{array}{cc}
 2 & 2 \\
 1 & 2
\end{array}
\right) = \frac{q^{2}}{q^2+q^{-2}},
\quad  \quad
\mathbb{P}\left( 
\begin{array}{cc}
 2& 2\\
 2& 1
\end{array}
\right) = \frac{q^{-2}}{q^2+q^{-2}},
$$
and the duality from \eqref{CGRSDuality} is given by
$$
D'\left( 
\left( 
\begin{array}{cc}
 2& 2\\
 1 & 1
\end{array}
\right),
\left( 
\begin{array}{cc}
 2& 2\\
 2& 1
\end{array}
\right)
\right) = \frac{q^9}{q+q^{-1}},
\quad
D'\left( 
\left( 
\begin{array}{cc}
 2& 2\\
 1 & 1
\end{array}
\right),
\left( 
\begin{array}{cc}
 2& 2\\
 1 & 2
\end{array}
\right)
\right) = \frac{q^7}{q+q^{-1}}, 
$$
$$
D'\left( 
\left( 
\begin{array}{cc}
 1 & 2\\
 1 & 2
\end{array}
\right),
\left( 
\begin{array}{cc}
 2& 2\\
 1 & 2
\end{array}
\right)
\right) = q^6.
$$
Taking 
$$
x= \left( 
\begin{array}{cc}
 1 & 2\\
 1 & 2
\end{array}
\right), \quad
x' = \left( 
\begin{array}{cc}
 2& 2\\
 1 & 1
\end{array}
\right),
$$
predicts
$$
\mathbb{E}\left[ D'(x,Y(\infty)) \right] = \frac{q^8}{q^2+q^{-2}} = \frac{q^9+q^7}{(q+q^{-1})(q^2+q^{-2})} = \mathbb{E}\left[ D'(x',Y(\infty))\right]
$$
which is correct. Note that this example reveals a very small typo in the definitions of $D'$ in (35) and (37) of \cite{CGRS}. The expressions there have $-\eta^x$ in the exponent instead of $\eta^x$. That definition of $D'$ would instead imply
$$
\frac{q^4}{q^2+q^{-2}} = \frac{q^7+q^5}{(q+q^{-1})(q^2+q^{-2})}.
$$
The typo occurs due to a sign error in equation (174) in the proof of Lemma 7.3 of \cite{CGRS}, which has been confirmed by the authors in a private communication.

\underline{$n=2,j=1/2$}

For standard ASEP with one particle, there is an obvious stationary measure given by having a particle at every site with probability $1$. If $X(\infty)$ denotes this measure and $y_i,1 \leq i \leq L$ denotes the particle configuration with a particle at site $i$ and holes elsewhere, then the duality from \cite{S} is $D(X(\infty), y_i)=q^{2L},$ which does not depend on $i$. Note that this also reveals a very small typo in (3.12) from \cite{S}, which occurs due to a sign error in (2.14). This was corrected in Sections 3 and 2.3 of \cite{S2}, and also confirmed by the author in a private communication.

\underline{$n=3,j=1,L=2$}
Consider states with precisely one first--class and one second--class particle. This results in four possible states, and if they are ordered by 
$$
\begin{array}{cc}
2 & 3 \\
\underline{1} & \underline{3}
\end{array} \quad  \quad
\begin{array}{cc}
3 & 3 \\
\underline{1} & \underline{2} 
\end{array} \quad \quad
\begin{array}{cc}
3 & 3 \\
\underline{2} & \underline{1}
\end{array} \quad \quad
\begin{array}{cc}
3 & 2 \\
\underline{3} & \underline{1} 
\end{array} \quad  \quad
$$
then the generator is a $4\times 4$ matrix
$$
\mathcal{L}= \left(
\begin{array}{cccc}
 -q^3(q+q^{-1})^2 & q^3(1+q^2) & q(1+q^2) & 0 \\
q^7 & -(q+q^3+q^7) & q^3 & q \\
q^7  & q^5 & -(q+q^5+q^7) & q \\ 
0 & q^5(1+q^2) & q^{3}(1+q^2) &  -q^5(q+q^{-1})^2
\end{array}
\right)
$$
and the stationary measure is proportional to
$$
\left(
\begin{array}{cccc}
q^{-4} & (q+q^{-1})q^{-7} & (q+q^{-1})q^{-9} & q^{-12}.
\end{array}
\right)
$$

For $\eta$ equal to 
$$
\begin{array}{cc}
1 & 3 \\
\underline{1} & \underline{2}
\end{array}
\quad 
\text{ or }
\quad 
\begin{array}{cc}
2 & 3 \\
\underline{1} & \underline{1} 
\end{array},
$$
the duality is proportional to
$$
\left(q^6\q{2}^{-2} \quad q^7 \q{2}^{-3} \quad 0 \quad 0 \right) \quad \text{ or } \quad  \left(q^7 \q{2}^{-3} \quad q^8\q{2}^{-4} \quad q^9\q{2}^{-3} \quad 0 \right),
$$
which predicts that
$$
\q{2}^{-2}\left( q^2 + 1\right) = \q{2}^{-3} \left( q^3 + q + \q{2}\right),
$$
which is correct.




\bibliographystyle{alpha}

\end{document}